\numberwithin{equation}{section}
\newtheorem{thm}{Theorem}[section]
\newtheorem{prp}[thm]{Proposition}
\newtheorem{cor}[thm]{Corollary}
\newcommand{\e}{\varepsilon}
\newcommand{\C}{\mathbb{C}}
\newcommand{\R}{\mathbb{R}}
\newcommand{\N}{\mathbb{N}}
\newcommand{\tr}{{\rm Tr}}
\newcommand{\rank}{{\rm rank}}
\newcommand{\HS}{{\rm HS}}
\newcommand{\srank}{{\rm srank}}
\newcommand{\E}{\mathbb{E}}
\newcommand{\p}{\mathbb{P}}
\newcommand{\supp}{\rm supp}
\newcommand{\aprox}{{\bf Approx}}
\newcommand{\eqdef}{\stackrel{\mathrm{def}}{=}}
\begin{document}

\title{Approximating matrices and convex bodies through Kadison-Singer}

\author{Omer Friedland}
\address{Institut de Math\'ematiques de Jussieu\\ Universit\'e Pierre et Marie Curie (Paris 6)\\ 4 place Jussieu, 75252 Paris, France}
\email{\tt omer.friedland@imj-prg.fr}

\author{Pierre Youssef}
\address {Laboratoire de Probabilit\'es et de Mod\`eles Al\'eatoires\\
Universit\'e Paris-Diderot\\
5 rue Thomas Mann, 75205, Paris CEDEX 13, France}
\email{\tt youssef@math.univ-paris-diderot.fr}

\begin{abstract}
We show that any $n\times m$ matrix $A$ can be approximated in operator norm by a submatrix with a number of columns of order the stable rank of $A$. This improves on existing results by removing an extra logarithmic factor in the size of the extracted matrix. Our proof uses  the recent solution of the Kadison-Singer problem. We also develop a sort of tensorization technique to deal with  constraint approximation problems. As an application, we provide a sparsification result with equal weights and an optimal approximate John's decomposition for non-symmetric convex bodies. This enables us to show that any convex body in $\R^n$ is arbitrary close to another one having $O(n)$ contact points and fills the gap left in the literature after the results of Rudelson and Srivastava by completely answering the problem. As a consequence, we also show that the method developed by Gu\'edon, Gordon and Meyer to establish the isomorphic Dvoretzky theorem yields to the best known result once we inject our improvements.
\end{abstract}

\keywords{Matrix approximation, sparsification, 
Kadison-Singer, 
John's decomposition, isomorphic Dvoretzky}

\subjclass[2010]{Primary 15A60; Secondary 65F50, 52A20, 46B07}

\maketitle

\section{Introduction}

Let $A$ be an $n\times m$ matrix. We denote by $s_i(A)=\sqrt{\lambda_i(A^*A)}$ the singular values of $A$, where $A^*$ denotes the adjoint matrix of $A$, and $\lambda_i (A^*A)$ denotes the $i^{th}$-eigenvalue of $A^*A$ rearranged in non-increasing order.

\vskip 10pt

The singular values measure the isomorphism ``quality'' of $A$ as an operator from $\ell_2^m$ to $\ell_2^n$. Indeed, we have $s_1(A)=\sup_{x\in S^{m-1}} \|Ax\|_2= \|A\|$ the operator norm of $A$, and $s_m(A)=\inf_{x\in S^{m-1}} \|Ax\|_2$. Therefore, one always has 
$$
s_{\min}(A) \|x\|_2 \le \|Ax\|_2 \le s_{\max}(A) \|x\|_2.
$$

When the smallest singular value is non-zero, the operator is injective and the inequalities above assert that $A$ is an isomorphism on its image with distortion the ratio of the largest to the smallest singular value. In a similar way, using the Courant-Fisher formula \cite[Theorem 7.3.8]{HJ}, one can characterize all singular values. Therefore, the sequence of singular values determines the action of the operator $A$.

\vskip 10pt

In this paper we are interested in the following problem. Find a coordinate subspace of $\R^m$ so that the restriction of $A$ to this subspace approximates the action of the operator $A$. This is done by finding $\sigma\subset [m]$ so that $A_\sigma$, the matrix containing the columns indexed by $\sigma$, verifies that 
$$
\|A_\sigma A_\sigma^* -AA^*\|
$$
is small. This insures that the spectrum of $A_\sigma A_\sigma^*$ is close to that of $AA^*$ implying the same for the sequence of singular values.

\vskip 10pt

If one looks for any subspace, not necessarily a coordinate one, then one would choose the subspace spanned by the singular vectors corresponding to the large singular values as the tiny ones are automatically approximated by zero. This suggests that the dimension of the smallest subspace would be the number of big singular values the operator has.

\vskip 10pt

A quantity measuring this is what is often called the {\it stable rank} (or {\it numerical rank}) which is defined as 
$$
\srank(A) \eqdef {\|A\|_{\HS}^2}/{\|A\|^2}
$$
where $\|A\|_{\HS}$ denotes the Hilbert-Schmidt norm of $A$, i.e. $\|A\|_{\HS}=\sqrt{\tr(AA^*)}=\sqrt{\sum_{i \le m} s_i(A)^2}$. It is easy to check that $\srank(A)$ is less or equal than $\rank(A)$. Since $\srank(A)$ is the ratio of the sum of all singular values squared to the squared largest one, it doesn't take into account the tiny singular values. Therefore, it is natural to aim at extracting a number of columns of order $\srank(A)$ so that the corresponding restricted matrix approximates the original one.

\vskip 10pt

We treat this problem while allowing a reweighting of the extracted columns. This corresponds to finding a multi-set $\sigma\subset [m]$ and looking at $A_{\sigma}$. The number of repetition of an index in $\sigma$ gives the value of the weight associated to the corresponding column. We denote by $\widetilde A$ the matrix obtained by normalizing the columns of $A$.

\vskip 10pt

The following theorem is our main result which shows that any $n\times m$ matrix $A$ can be approximated in operator norm by a submatrix with a number of columns of order the stable rank of $A$. It improves on existing results by removing an extra logarithmic factor in the size of the extracted matrix.

\begin{thm} \label{thm-main}
Let $A$ be an $n\times m$ matrix. Then, for any $\e>0$ there exists a multi-set $\sigma \subset [m]$ with $|\sigma| \le \srank(A)/c\e^2$ so that 
$$
\left\|c\e^2\|A\|^2 {\widetilde A_{\sigma}}{\widetilde A_{\sigma}}^*- AA^*\right\|
 \le \e \|A\|^2
$$
where $c>0$ is an absolute constant. Moreover if $A=\kappa \widetilde A$ for some $\kappa>0$, then the above holds with $\sigma$ being a set.
\end{thm}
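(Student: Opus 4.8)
The plan is to normalize, reduce to the case of nearly equal column norms by a splitting device, and then apply the solution of Kadison--Singer (in the Marcus--Spielman--Srivastava form) a bounded number of times. First rescale so that $\|A\|=1$; the case $\e\ge1$ is trivial (take $\sigma=\emptyset$, so that $\|-AA^*\|=1\le\e$), so assume $\e<1$. Write $T:=AA^*=\sum_{i\le m}a_ia_i^*$ where $a_i$ are the columns of $A$, and recall $\srank(A)=\tr(T)$. The engine will be the following consequence of the positive solution of Kadison--Singer: there is an absolute constant $c_0$ so that for any vectors $u_1,\dots,u_N$ with $\|u_j\|^2\le\delta$ for all $j$, there exists $\sigma\subseteq[N]$ with $\bigl\|\sum_{j\in\sigma}u_ju_j^*-\tfrac12\sum_j u_ju_j^*\bigr\|\le c_0\sqrt{\delta\,\|\sum_j u_ju_j^*\|}$. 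This follows from the MSS theorem by the standard device of letting $v_j$ equal $\sqrt2\,u_j$ or $0$ with probability $1/2$ on each of two coordinate blocks, completing the second moment to the identity by adding small deterministic vectors, and then rescaling by homogeneity. Iterating it $k$ times yields $\sigma\subseteq[N]$ with $\bigl\|2^k\sum_{j\in\sigma}u_ju_j^*-\sum_j u_ju_j^*\bigr\|\le 2^{k+1}c_0\sqrt{\delta\,\|\sum_j u_ju_j^*\|}$, the geometric series of errors being summable because $\sum_{\sigma}u_ju_j^*\preceq\sum_j u_ju_j^*$ decreases along the iteration.

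Next I would reduce to columns of a single scale $\theta$. Given $a_i$ with $\|a_i\|^2\ge\theta$, replace it by $N_i:=\lceil\|a_i\|^2/\theta\rceil$ copies of $a_i/\sqrt{N_i}$; this preserves $\sum a_ia_i^*$, makes every new vector have squared norm in $[\theta/2,\theta]$, and keeps the total count $N\le 2\tr(T)/\theta$. The point is that each copy of $a_i$ is still a positive multiple of $\widetilde a_i$, so selecting copies amounts to choosing a multi-set of columns of $\widetilde A$. Applying the iterated lemma to the family $\{b_j\}_{j\le N}$ of copies gives $\sigma$ with $\bigl\|2^k\sum_{j\in\sigma}b_jb_j^*-T\bigr\|\le 2^{k+1}c_0\sqrt{\theta}$. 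Since $b_jb_j^*=\|b_j\|^2\,\widetilde b_j\widetilde b_j^*$ with $\|b_j\|^2\approx\theta$, setting $w:=2^k\theta$ we get $\bigl\|w\sum_{j\in\sigma}\widetilde b_j\widetilde b_j^*-T\bigr\|\le 2^{k+1}c_0\sqrt\theta+O\!\bigl(w\,\|\sum_{\sigma}\widetilde b_j\widetilde b_j^*\|\bigr)$. Choosing $\theta$ of order $\e^2$ (so that $k=O(1)$) and tuning $w=c\e^2$ makes the first term $\le\e/2$; the second term, coming from the spread of the $\|b_j\|^2$, is killed by splitting a bit more finely so that each copy has squared norm in $[\theta(1-\e),\theta]$ (possible once $\theta\ll\e\min_i\|a_i\|^2$). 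Finally, since $\theta$, $\min_j\|b_j\|^2$ and $w$ are all of order $\e^2$, a trace count gives $|\sigma|\le\tr\!\bigl(\sum_\sigma b_jb_j^*\bigr)/\min_j\|b_j\|^2\lesssim \tr(T)/w=\srank(A)/(c\e^2)$ after adjusting the constant. In the case $A=\kappa\widetilde A$ all columns already have norm $\kappa$, no splitting is needed, and $\sigma$ comes out an honest set.

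The step I expect to be the genuine obstacle is this reduction to comparable column norms when $A$ has columns of very disparate size — in particular columns with $\|a_i\|^2$ much smaller than $\e^2\|A\|^2$, which cannot be cut into fragments of the required scale $\theta\sim\e^2\|A\|^2$, and which are precisely the reason a reweighting (a multi-set rather than a set) is forced on us. Such ``light'' columns have to be dealt with separately: one must either discard a family of them whose aggregate contribution to $AA^*$ has operator norm $\le\e\|A\|^2$, or recurse at a smaller scale on the submatrix they span, or run a multiscale version of the splitting in which the fragments are organized by dyadic scales of $\|a_i\|^2$. Carrying this out while keeping the approximation error additive and the total multiplicity of $\sigma$ below $\srank(A)/(c\e^2)$ throughout is the technical core of the argument.
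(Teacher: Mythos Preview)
Your outline has the right skeleton --- splitting to get comparable column norms, then iterating the MSS halving lemma --- and this is exactly what the paper does. The genuine gap is not where you place it, however; it is in your error bound for the iteration.

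You claim that after $k$ halvings the accumulated error is at most $2^{k+1}c_0\sqrt{\delta\|\sum_j u_ju_j^*\|}$, justified by ``the geometric series of errors being summable because $\sum_{\sigma}u_ju_j^*$ decreases.'' But the bound you write comes from the trivial estimate $\|\sum_{\sigma_\ell}u_ju_j^*\|\le\|T\|$ for every $\ell$, which makes the telescoped errors $\sum_{\ell<k}2^{\ell+1}c_0\sqrt{\delta\|T\|}$ --- a \emph{geometrically growing} sum, not a summable one. This is why you are forced to take $k=O(1)$ and hence $\theta\sim\e^2$, which in turn creates the light-column obstacle you spend the last paragraph on.

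The paper's point is precisely that this obstacle is an artifact of the crude iteration bound. One keeps track of $\|B_\ell\|$ where $B_\ell=\sum_{\sigma_\ell}u_ju_j^*$: the two-sided conclusion of the halving lemma gives $\|2^\ell B_\ell\|\in[(2-\beta_\ell)\|T\|,\beta_\ell\|T\|]$ with $\beta_\ell=\prod_{i\le\ell}\alpha_i$ and $\alpha_i=(1+\sqrt{2\delta/\|B_{i-1}\|})^2$. An induction shows $\beta_\ell\le 1+\e/2$ for all $\ell\le k$, so $\|B_{\ell}\|\ge 2^{-\ell}(1-\e/2)\|T\|$, whence the $\ell$-th increment contributes $2^\ell\cdot c_0\sqrt{\delta\|B_{\ell-1}\|}\lesssim c_0\sqrt{2^\ell\delta\|T\|}$. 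Now the sum over $\ell\le k$ is dominated by its last term, and the total error is of order $\sqrt{2^k\delta\|T\|}$ rather than $2^k\sqrt{\delta\|T\|}$. Choosing $k$ maximal with $M/2^k\gtrsim\srank(A)/\e^2$ (equivalently $2^k\delta\lesssim\e^2\|T\|$) makes this at most $\e\|T\|$, \emph{regardless of how small $\delta$ is}.

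Because $\delta=\kappa=\tr(T)/M$ may be taken arbitrarily small, you can split \emph{every} column --- however light --- into pieces of weight $\kappa$; the ``columns with $\|a_i\|^2\ll\e^2$'' issue simply does not arise, and no multiscale decomposition or discarding is needed. In short: fix the iteration bound (use $\|B_\ell\|\asymp 2^{-\ell}\|T\|$, not $\|B_\ell\|\le\|T\|$), let $k$ be large, and the rest of your argument goes through as written.
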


The conclusion of the above theorem can be also formulated as follows. There exists an $m\times m$ non-negative diagonal matrix with at most $\srank(A)/c\e^2$ non-zero entries so that 
$$
\|ADA^*- AA^*\| \le \e \|A\|^2 .
$$

When $A=\kappa \widetilde A$ for some $\kappa>0$, the non-zero entries of $D$ are given explicitly by $c\e^2\|A\|^2/\kappa^2$.

\vskip 10pt

Results of this kind, which are sometimes called ``column subset selection'' or ``column selection approximation'' attract a lot of attention in the numerical analysis and the analysis of algorithms communities. Normally they address this problem from an algorithmic point of view. Many papers are devoted to this problem, let us mention a few \cites{BMD,BMW,CM,AB, Tro}. We also refer to \cite{tropp-survey} and references therein for a detailed exposition on the topic. 

\smallskip

We are interested in studying this problem from a theoretical point of view. Theorem \ref{thm-main} produces the minimal size approximation compared to the results available in the literature. For instance, the best known bounds on the number of selected columns are of order $\srank(A)\ln( \srank(A))$ (see Table 2 and Theorem 4.1 in \cite{HI}). This is for example done in \cite{RV} using random sampling in which case the logarithmic factor is needed. Thus, we improve by removing the logarithmic factor in the existing results. Theorem \ref{thm-main} is similar in nature to \cite[Corollary 1.2]{Yo1} where one finds an approximation valid only over the range of $A$ which allows to reduce the size of the extraction below the stable rank.

\vskip 10pt

It should be noted that our result is only existential and we do not know how to produce an algorithm achieving the extraction promised in the theorem. The main reason for this is that our proof is based on the solution of the Kadison-Singer problem \cite{MSS} which is not constructive.

\vskip 10pt

Our second interest in this paper is a sort of approximation problem with constraints. One may ask to achieve an approximation of the matrix while keeping some special properties it has. For example, given an $n\times m$ matrix $A$ and a vector $v$ in its kernel, one may be interested in approximating $A$ by a submatrix with fewer columns while keeping $v$ not far from the kernel of the restricted matrix. This is motivated by some geometric applications which will be discussed later. To give a brief idea, looking at the columns of the matrix $A$ as vectors in $\R^n$, we see the vector $v$ from the kernel as some weighted barycenter of the column vectors of $A$. Therefore, the constraint in this case can be seen as keeping $v$ not far from being a weighted barycenter of the selected column vectors. One can also consider constraints of the form $\Vert Av\Vert_2\leq \alpha \Vert v\Vert_2$ for some positive constant $\alpha$ and ask to keep this property almost stable after sparsification. The method we develop allows to achieve this as well as considering multiple constraints. 

\vskip 10pt

Let $A$ be an $n\times m$ matrix. Let $\e>0$ and let $D$ be an $m\times m$ diagonal matrix with non-negative weights on its diagonal. We say that $A$ is $(\alpha,\beta,D)$-approximable matrix if
$$
\alpha AA^*\preceq ADA^*\preceq \beta AA^*. 
$$

We denote this property by $A\in \aprox_{\alpha,\beta}D$. In the case where $\alpha=1-\e$ and $\beta=1+\e$, we simply say $A\in \aprox_{\e}D$. Note that we always have $A\in \aprox_\e I_m$, and if $|\supp(D)|<n$ then $\aprox_\e D$ is empty for any $D$.

\vskip 10pt

Our second main result is the following.

\begin{thm} \label{thm-sparse-with-constraints}
Let $A$ be an $n\times m$ matrix and let $V=(v_i)_{1 \le i \le k}$ be an $m\times k$ matrix. Consider the $(n + k)\times m$ matrix $B$ given by $B^*=(A^*\mid V)$. Let $\e > 0$ and let $D$ be an $m\times m$ diagonal matrix so that $B\in \aprox_\e D$. Then $A, V^*\in \aprox_\e D$, and for any $1 \le i \le k$ 
\begin{align} \label{eq-ADv}
\left\|(AA^*)^{-\frac{1}{2}}A \left[D-(1 + \e)I_n \right]v_i \right\|_2 \le 2\e \|v_i\|_2.
\end{align}
\end{thm}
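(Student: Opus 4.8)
The plan is to exploit the block structure $B^{*}=(A^{*}\mid V)$ and to reduce the hypothesis $B\in\aprox_{\e}D$ to a single two-sided operator inequality for a positive semidefinite matrix, which can then be fed into the Cauchy--Schwarz inequality.

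First I would record the elementary identity: for $w=(x,y)\in\R^{n}\times\R^{k}$ one has $B^{*}w=A^{*}x+Vy$, so that $w^{*}BB^{*}w=\|A^{*}x+Vy\|_{2}^{2}$ and $w^{*}BDB^{*}w=(A^{*}x+Vy)^{*}D(A^{*}x+Vy)$. Restricting the hypothesis $(1-\e)BB^{*}\preceq BDB^{*}\preceq(1+\e)BB^{*}$ to vectors $w=(x,0)$ gives $(1-\e)AA^{*}\preceq ADA^{*}\preceq(1+\e)AA^{*}$, i.e.\ $A\in\aprox_{\e}D$; restricting to $w=(0,y)$ gives $(1-\e)V^{*}V\preceq V^{*}DV\preceq(1+\e)V^{*}V$, i.e.\ $V^{*}\in\aprox_{\e}D$ (recall $V^{*}V=(V^{*})(V^{*})^{*}$ and $V^{*}DV=(V^{*})D(V^{*})^{*}$).

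For the estimate \eqref{eq-ADv}, the observation I would build on is that $B\in\aprox_{\e}D$ is equivalent to the two-sided inequality $0\preceq B[(1+\e)I_{m}-D]B^{*}\preceq 2\e\,BB^{*}$; indeed, positive semidefiniteness of $M\eqdef B[(1+\e)I_{m}-D]B^{*}$ is exactly the inequality $BDB^{*}\preceq(1+\e)BB^{*}$, while the upper bound $M\preceq 2\e\,BB^{*}$ is exactly $(1-\e)BB^{*}\preceq BDB^{*}$ after rearranging. Applying the Cauchy--Schwarz inequality to the positive semidefinite bilinear form $(w_{1},w_{2})\mapsto w_{1}^{*}Mw_{2}$ and then using $w^{*}Mw\le 2\e\,\|B^{*}w\|_{2}^{2}$, I obtain
$$\bigl|w_{1}^{*}Mw_{2}\bigr|\le 2\e\,\|B^{*}w_{1}\|_{2}\,\|B^{*}w_{2}\|_{2}\qquad\text{for all }w_{1},w_{2}\in\R^{n+k}.$$
Specializing $w_{1}=(u,0)$ with $u\in\R^{n}$ and $w_{2}=(0,e_{i})$ with $e_{i}$ the $i$-th vector of the standard basis of $\R^{k}$, one has $B^{*}w_{1}=A^{*}u$, $B^{*}w_{2}=v_{i}$, and $w_{1}^{*}Mw_{2}=\langle A^{*}u,[(1+\e)I_{m}-D]v_{i}\rangle=-u^{*}A[D-(1+\e)I_{m}]v_{i}$, whence $\bigl|u^{*}A[D-(1+\e)I_{m}]v_{i}\bigr|\le 2\e\,\|A^{*}u\|_{2}\,\|v_{i}\|_{2}$ for every $u$.

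Finally I would take the supremum over all $u$ with $\|A^{*}u\|_{2}\le 1$. Using the duality identity $\|(AA^{*})^{-1/2}As\|_{2}=\sup\{u^{*}As:\ u^{*}AA^{*}u\le 1\}$ with $s=[D-(1+\e)I_{m}]v_{i}$ (valid when $AA^{*}$ is invertible; otherwise one restricts to the range of $A$ and reads $(AA^{*})^{-1/2}$ as a pseudo-inverse), the left-hand side becomes the quantity appearing in \eqref{eq-ADv} and the bound reads precisely $2\e\|v_{i}\|_{2}$. The computations here are all routine; the only real idea is the second step, namely encoding $\aprox_{\e}$ as a positive semidefinite matrix lying below $2\e\,BB^{*}$, which is what allows one application of Cauchy--Schwarz to capture the mixed $A$--$V$ information without resorting to polarization together with an optimization over a scaling parameter. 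The sole point needing a word of care is the possibly singular $AA^{*}$, handled by passing to its range.
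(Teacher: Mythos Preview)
Your proof is correct and follows essentially the same path as the paper's: both encode the hypothesis as positivity of $M=B[(1+\e)I_m-D]B^{*}$ (the paper replaces the diagonal blocks of $M$ by the larger $2\e AA^{*}$, $2\e V^{*}V$ to form its matrix $K$, which is a harmless weakening), and then extract the off-diagonal block by testing against block vectors. The only cosmetic difference is that the paper tests $K$ against $((AA^{*})^{-1/2}w,\lambda e_i)$ and uses the nonnegativity of the resulting quadratic in $\lambda$ (discriminant $\le 0$), whereas you invoke Cauchy--Schwarz for the PSD form $M$ and finish with a duality step; these are equivalent, and your packaging via $0\preceq M\preceq 2\e\,BB^{*}$ is arguably a bit cleaner since it avoids the detour through the block matrix $K$.
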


To better understand the conclusion stated in \eqref{eq-ADv}, suppose that $A$ is isotropic (i.e. $AA^*=I_n$). Then \eqref{eq-ADv} implies that the action of the obtained submatrix $AD^{\frac12}$ on the updated matrix $D^{\frac12}V$ approximates in some sense the one of the original matrix $A$ on $V$. This is checked by controlling the distances between the corresponding columns.

\vskip 10pt

The proof of Theorem~\ref{thm-sparse-with-constraints} uses the simple fact that a coordinate projection of any element of $\aprox_{\e}D$ remains in this set together with some careful matrix analysis.

\vskip 10pt

Theorem~\ref{thm-sparse-with-constraints} should be combined with Theorem~\ref{thm-main} to deal with constraint approximation. The main idea is to implement the constraint into a higher dimensional approximation problem then use the combination of Theorems \ref{thm-main} and \ref{thm-sparse-with-constraints} to deduce the original constraint approximation problem. This idea will be the key behind the applications to the study of contact points of a non-symmetric convex body which will be discussed later in the paper. We believe that Theorem~\ref{thm-sparse-with-constraints} may have other implications for various constraint approximation problems different from the geometric ones discussed here.

\vskip 10pt

The paper is organized as follows. In Section \ref{sec-proof} we prove the two main results of this paper. Then, in Section \ref{sec-new-bss}, we present a corollary about sparsification with equal weights of identity decompositions. Section \ref{sec-john} contains the result concerning approximate John's decompositions which is the key to proving the two applications which are presented in the last two sections. We discuss contact points of convex bodies in Section \ref{sec-contact} and the isomorphic Dvoretzky theorem in Section \ref{sec-dvor}.

\bigskip

\section{Proofs of the main results} \label{sec-proof}

Using the method of interlacing polynomials, Marcus, Spielman and Srivastava \cite{MSS} proved the following result which implies the $KS_2$ conjecture of Weaver \cite{weaver}, which known to be equivalent to the Kadison-Singer problem \cite{KS} (for further refinements of the $KS_2$ conjecture see \cite{casazza}).

\begin{thm} \cite{MSS} \label{thm-mss}
Let $\e > 0$ and let $v_1, \dots, v_m$ be independent random vectors in $\C^n$ with finite support, so that $\E \sum_{i \le m} v_i \otimes v_i =I_n$ and $\E \|v_i\|_2^2 \le \e$ for all $i$. Then
$$
\p \left[ \left\|\sum_{i \le m} v_i \otimes v_i \right\|\le \left(1 + \sqrt{\e}\right)^2 \right] > 0.
$$
\end{thm}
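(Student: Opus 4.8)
\smallskip

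The plan is to follow the method of interlacing families of polynomials. Since each $v_i$ has finite support, write $v_i=w_{i,j}$ with probability $p_{i,j}>0$ for $j$ in a finite index set, and set $A_i\eqdef\E\,v_i\otimes v_i$, so the hypotheses read $\sum_i A_i=I_n$ and $\tr A_i=\E\|v_i\|_2^2\le\e$. For a partial assignment $(j_1,\dots,j_k)$ of the first $k$ coordinates, put
$$
f_{j_1,\dots,j_k}(x)\eqdef\Big(\prod_{i=1}^k p_{i,j_i}\Big)\,\E\,\det\Big(xI_n-\sum_{i=1}^k w_{i,j_i}\otimes w_{i,j_i}-\sum_{i=k+1}^m v_i\otimes v_i\Big),
$$
so that $f_\emptyset(x)=\E\,\det\big(xI_n-\sum_i v_i\otimes v_i\big)$ is the expected characteristic polynomial and $f_{j_1,\dots,j_k}=\sum_t f_{j_1,\dots,j_k,t}$. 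The goal is to show these form an \emph{interlacing family}; granting this, some leaf $f_{j_1,\dots,j_m}$ has largest root at most that of $f_\emptyset$, and since that leaf is a positive multiple of $\det\big(xI_n-\sum_i w_{i,j_i}\otimes w_{i,j_i}\big)$, the event $\{v_i=w_{i,j_i}\ \forall i\}$ has positive probability and on it $\big\|\sum_i v_i\otimes v_i\big\|$ equals that largest root. So it remains to (i) establish the interlacing-family structure and (ii) bound the largest root of $f_\emptyset$ by $(1+\sqrt\e)^2$.

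For (i), the crucial input is that the expected characteristic polynomial of a sum of independent rank-one Hermitian positive semidefinite random matrices is real-rooted. I would deduce this from the theory of real stable polynomials: $\det\big(\sum_i z_i A_i\big)$ is real stable in $(z_1,\dots,z_m)$ when the $A_i$ are Hermitian positive semidefinite; each operator $1-\partial_{z_i}$ preserves real stability; specializing variables preserves it; and a univariate real stable polynomial is real-rooted. The link is the identity $\E_v\,\det(M-v\otimes v)=(1-\partial_t)\det(M+tA)\big|_{t=0}$ with $A=\E\,v\otimes v$ (matrix determinant lemma), iterated over the independent $v_i$, which exhibits $f_\emptyset$ and every internal $f_{j_1,\dots,j_k}$ as such an expected characteristic polynomial — as is every convex combination $\sum_t\lambda_t f_{j_1,\dots,j_k,t}$ (it corresponds to reweighting the law of the $(k+1)$-st vector). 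The classical fact that real-rootedness of all convex combinations of $\{f_{j_1,\dots,j_k,t}\}_t$ is equivalent to their having a common interlacing then yields (i).

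For (ii), since $\sum_i A_i=I_n$ one has $f_\emptyset(x)=\big(\prod_{i=1}^m(1-\partial_{w_i})\big)P(w)\big|_{w=x\mathbf{1}}$ with $P(w)=\det\big(\sum_i w_i A_i\big)$, so it suffices to find $s$ with the point $s\mathbf{1}$ lying above all roots of $\big(\prod_i(1-\partial_{w_i})\big)P$. Here I would run the multivariate barrier-function argument: set $P_0=P$, $P_j=(1-\partial_{w_j})P_{j-1}$, and use the barriers $\Phi^i_{P_j}(w)=\partial_{w_i}P_j(w)/P_j(w)$. By Jacobi's formula $\Phi^i_{P_0}(s\mathbf{1})=\tr\big((sI_n)^{-1}A_i\big)=\tr A_i/s\le\e/s$. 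The key lemma from the real-stability toolkit says that if $w$ is above the roots of a real stable $p$ and $\Phi^j_p(w)\le 1-1/\delta$, then $w+\delta e_j$ is above the roots of $(1-\partial_{w_j})p$ and no barrier $\Phi^i$ increases. Applying it in coordinates $1,\dots,m$ with the constant shift $\delta=s/(s-\e)$ — legitimate because $\e/s\le 1-1/\delta$ and the barriers only shrink along the way — shows $(s+\delta)\mathbf{1}$ is above all roots of $f_\emptyset$. Hence the largest root of $f_\emptyset$ is at most $\inf_{s>\e}\big(s+\tfrac{s}{s-\e}\big)$, whose minimum, attained at $s=\e+\sqrt\e$, equals $(1+\sqrt\e)^2$.

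I expect the main obstacle to be part (ii): making precise the notion of a point being ``above the roots'' of a real stable polynomial, proving the non-negativity and coordinatewise monotonicity of the barrier functions on that region, and establishing the one-step lemma that applying $1-\partial_{w_j}$ together with an upward shift preserves both properties. Granting that lemma, the optimization producing $(1+\sqrt\e)^2$ is a one-line computation. A secondary obstacle is assembling the real-stability closure properties behind (i) — the real-rootedness of the expected characteristic polynomial — which rests on Borcea--Br\"and\'en-type results on real stable polynomials, exactly as in \cite{MSS}.
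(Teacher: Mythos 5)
The paper does not actually prove Theorem~\ref{thm-mss}: it is quoted verbatim from Marcus--Spielman--Srivastava \cite{MSS} and treated as a black box, so there is no internal proof to compare against. Your sketch is a correct and faithful reconstruction of the original MSS argument: the mixed characteristic polynomial via iterated $(1-\partial_{z_i})$ and the matrix determinant lemma, real-rootedness from real stability (Borcea--Br\"and\'en closure properties), the interlacing-family mechanism producing a leaf whose largest root is bounded by that of the expected characteristic polynomial, and the multivariate barrier argument giving largest root at most $\inf_{s>\e}\bigl(s+\tfrac{s}{s-\e}\bigr)=(1+\sqrt{\e})^2$. The details you flag as ``the main obstacle'' (the above-the-roots region, monotonicity of the barrier $\Phi^j_p$, and the one-step lemma for $1-\partial_{w_j}$ with the shift $\delta$) are exactly the technical core of \cite{MSS}, and your optimization at $s=\e+\sqrt{\e}$ is correct; nothing more would be asked of a proof of this cited result in the context of the present paper.
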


First, let us generalize Theorem \ref{thm-mss} by replacing the isotropic covariance structure of $\sum_{i \le m} v_i \otimes v_i$ by any other one, i.e. random vectors whose sum is not necessarily isotropic. 

\begin{prp} \label{prp-gen-mss}
Let $\delta>0$ and let $v_1, \dots, v_m$ be independent random vectors in $\C^n$ with finite support, so that $B :=\E \sum_{i \le m} v_i \otimes v_i$ is an $n\times n$ positive semi-definite Hermitian matrix and $\E \|v_i\|_2^2 \le \delta$ for all $i$. Then
$$
\p \left[ \sum_{i \le m} v_i \otimes v_i \preceq B + \gamma I_n \right] > 0
$$
where $\gamma:=\gamma(\delta, B)=\|B\|\left[\left(1 + \sqrt{\delta/\|B\|}\right)^2-1 \right]$.
\end{prp}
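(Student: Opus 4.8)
The plan is to reduce Proposition~\ref{prp-gen-mss} to Theorem~\ref{thm-mss} by a rescaling that turns the covariance matrix $B$ into the identity. Since $B$ need not be invertible, I would first restrict attention to the range of $B$: let $P$ be the orthogonal projection onto $W:=\mathrm{range}(B)$, and note that each $v_i$ lies almost surely in $W$, because $\E\|(I_n-P)v_i\|_2^2 = (I_n-P)\left(\E\sum_j v_j\otimes v_j\right)(I_n-P)$ has zero trace (it equals $(I_n-P)B(I_n-P)=0$), which forces $(I_n-P)v_i=0$ a.s. Hence the whole problem lives on $W$, where $B$ is a positive definite operator; both sides of the desired inequality $\sum_i v_i\otimes v_i \preceq B+\gamma I_n$ can be compared on $W$, and on $W^\perp$ both sides vanish except for the harmless extra $\gamma I$ term on the right, so it suffices to prove $\sum_i v_i\otimes v_i \preceq B+\gamma P$ as operators on $W$.

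Next I would set $w_i := B^{-1/2} v_i$, where $B^{-1/2}$ is the inverse square root of $B$ acting on $W$. These are independent, finitely supported random vectors in $W$, and by construction $\E\sum_i w_i\otimes w_i = B^{-1/2}\left(\E\sum_i v_i\otimes v_i\right)B^{-1/2} = B^{-1/2}BB^{-1/2} = P = I_W$, so the $w_i$ satisfy the isotropy hypothesis of Theorem~\ref{thm-mss} on $W$. For the norm bound, $\E\|w_i\|_2^2 = \E\langle B^{-1}v_i,v_i\rangle \le \|B^{-1}\|_{W}\,\E\|v_i\|_2^2 \le \delta/s_{\min}(B)$, where $s_{\min}(B)$ is the smallest nonzero eigenvalue of $B$. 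This is the one place I should be careful: applying Theorem~\ref{thm-mss} with $\e = \delta/s_{\min}(B)$ would give $\gamma = \|B\|[(1+\sqrt{\delta/s_{\min}(B)})^2-1]$, which is \emph{larger} than the claimed $\gamma = \|B\|[(1+\sqrt{\delta/\|B\|})^2-1]$ unless $B$ is a multiple of $P$ on $W$. So a direct rescaling by $B^{-1/2}$ is too lossy; the statement with $\|B\|$ in place of $s_{\min}(B)$ suggests instead rescaling by the \emph{scalar} $\|B\|^{-1/2}$.

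So I would instead set $u_i := \|B\|^{-1/2} v_i$ and $\e := \delta/\|B\|$. Then $\E\|u_i\|_2^2 \le \delta/\|B\| = \e$, and $C:=\E\sum_i u_i\otimes u_i = B/\|B\| \preceq P \preceq I_n$, so $C$ is positive semidefinite with $\|C\|\le 1$ but is not necessarily the identity. To fix this I would add a deterministic correction: by a standard device one writes $I_W - C = \sum_j x_j x_j^*$ as a finite sum of rank-one terms with $\|x_j\|_2^2 \le \e$ (possible since $I_W-C\succeq 0$ has operator norm at most $1$, so its eigenvalues can be split into chunks of size $\le\e$), and appends to the list the constant ``random'' vectors $x_j$. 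The augmented family then has expected sum exactly $I_W$ and per-vector second moment $\le\e$, so Theorem~\ref{thm-mss} applies on $W$ and yields, with positive probability, $\sum_i u_i\otimes u_i + \sum_j x_j x_j^* \preceq (1+\sqrt\e)^2 I_W$, whence $\sum_i u_i\otimes u_i \preceq (1+\sqrt\e)^2 I_W - (I_W - C) = C + [(1+\sqrt\e)^2-1]I_W$. Multiplying through by $\|B\|$ and recalling $u_i=\|B\|^{-1/2}v_i$, $C=B/\|B\|$, $\e=\delta/\|B\|$ gives $\sum_i v_i\otimes v_i \preceq B + \|B\|[(1+\sqrt{\delta/\|B\|})^2-1]\,P$ on $W$, and extending by the trivial $\gamma I_n$ bound on $W^\perp$ finishes the proof. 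The main obstacle is exactly the bookkeeping just described: making sure the rescaling uses the scalar $\|B\|$ (not $B^{-1/2}$) so that the constant $\gamma$ comes out sharp, and handling the degenerate directions of $B$ cleanly; everything else is a direct appeal to Theorem~\ref{thm-mss}.
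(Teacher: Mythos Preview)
Your proposal is correct and follows essentially the same route as the paper: rescale by the scalar $\|B\|^{-1/2}$, append deterministic rank-one vectors (obtained by chopping the eigenvalues of the positive semidefinite defect $I-B/\|B\|$ into pieces of size at most $\delta/\|B\|$) so that the augmented family is isotropic, and then apply Theorem~\ref{thm-mss}. The paper simply works on all of $\C^n$ rather than restricting to $W=\mathrm{range}(B)$, which avoids your range/kernel discussion but is otherwise identical; your detour through $B^{-1/2}$ is unnecessary but harmless.
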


\begin{proof}
We may assume without loss of generality that $B\preceq I_n$, otherwise we replace $B$ by $B/\|B\|$, $v_i$ by $v_i/\sqrt{\|B\|}$ and $\delta$ by $\delta/\|B\|$.

Denoting $C:=I_n-B$, then $C$ is a positive semi-definite matrix and can be decomposed as $C=\sum_{i \le n} \lambda_i u_i \otimes u_i $, where $\lambda_i \ge 0$ are the non-negative eigenvalues of $C$ and $u_i$ are its unit norm eigenvectors (also of $B$).

Now, for each $i \le n$, if $\lambda_i \le \delta$ then we define $\tilde u_i=\sqrt{\lambda_i} u_i$ so that $\|\tilde u_i\|_2^2 \le \delta$. If $\lambda_i > \delta$, then we proceed by splitting as follows 
$$
\lambda_i u_i \otimes u_i =\delta u_i \otimes u_i + \dots + \delta u_i \otimes u_i + (\lambda_i-\delta \lfloor \frac{\lambda_i}{\delta}\rfloor) u_i \otimes u_i
$$
where there are $\lfloor \frac{\lambda_i}{\delta}\rfloor$ terms equal to $\delta u_i \otimes u_i $ in the previous sum.

Renaming each of the vectors in the sum, we conclude that we can write $C=\sum_{i \le k} \tilde u_i \otimes \tilde u_i$ so that for any $i \le k$ we have $\|\tilde u_i\|_2^2 \le \delta$.

Define the following independent random vectors in $\C^n$:
$$
\tilde v_i=\begin{cases}
v_i & i\le m \\
\tilde u_{i-m} & m + 1 \le i \le m + k. 
\end{cases}
$$

Then the $\tilde v_i$'s satisfy $\E \sum_{i \le m + k} \tilde v_i \otimes \tilde v_i =I_n$ and $\E \|\tilde v_i\|_2^2 \le \delta$. Applying Theorem \ref{thm-mss}, we deduce 
$$
\p \left[ \sum_{i \le m + k} \tilde v_i \otimes \tilde v_i \preceq \left(1 + \sqrt{\delta}\right)^2I_n \right] > 0 .
$$

Noting that $\sum_{i \le m + k} \tilde v_i \otimes \tilde v_i =\sum_{i \le m} v_i \otimes v_i + I_n-B$ completes the from of Proposition \ref{prp-gen-mss}.
\end{proof}

Similarly to what is done in \cite[Corollary 1.5]{MSS}, we deduce the following corollary (which can be generalized to any number of partitions of $[m]$).

\begin{cor} \label{cor-gen-mss}
Let $\delta>0$ and let $A=\sum_{i \le m} u_i \otimes u_i $ be an $n\times n$ matrix where $u_i \in \C^n$ and $\|u_i\|_2^2 \le \delta$. Then there exists $\sigma\subset[m]$ with $|\sigma|\le m/2$ so that 
$$
\left\|2\sum_{i\in\sigma} u_i \otimes u_i -A \right\|\le \gamma
$$
where $\gamma=\gamma(2\delta, A)$ is the constant from Proposition \ref{prp-gen-mss}.
\end{cor}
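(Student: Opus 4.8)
The plan is to randomly split the sum $A = \sum_{i\le m} u_i \otimes u_i$ into two halves and apply Proposition \ref{prp-gen-mss} to each half, noting that a bound on one half automatically controls the other since the two halves sum to $A$. Concretely, I would introduce i.i.d.\ signs or, more conveniently, define for each $i\le m$ the random vector $v_i = \sqrt{2}\,\xi_i u_i$ where $\xi_i \in \{0,1\}$ are independent Bernoulli$(1/2)$ variables; these are independent, finitely supported random vectors in $\C^n$ with $\E \sum_{i\le m} v_i\otimes v_i = \sum_{i\le m} u_i\otimes u_i = A$, and $\E\|v_i\|_2^2 = \|u_i\|_2^2 \le \delta \le 2\delta$. (I phrase it with the bound $2\delta$ so that the relevant constant is exactly $\gamma(2\delta, A)$ as claimed, which is what the two-sided split naturally produces: each selected vector carries weight $2$, hence squared norm up to $2\delta$.)

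Applying Proposition \ref{prp-gen-mss} to the family $(v_i)$ with parameter $2\delta$ gives, with positive probability,
$$
\sum_{i\le m} v_i\otimes v_i = 2\sum_{i\in\sigma} u_i\otimes u_i \preceq A + \gamma I_n,
$$
where $\sigma = \{i : \xi_i = 1\}$ and $\gamma = \gamma(2\delta, A)$. For the lower bound, write $\sigma^c = [m]\setminus\sigma$ and observe that
$$
2\sum_{i\in\sigma} u_i\otimes u_i = 2A - 2\sum_{i\in\sigma^c} u_i\otimes u_i \succeq 2A - (A + \gamma I_n) = A - \gamma I_n,
$$
where the middle inequality applies the same Proposition \ref{prp-gen-mss} bound to the complementary half; since $\sigma$ and $\sigma^c$ play symmetric roles under the Bernoulli model, both inequalities hold simultaneously with positive probability (the event is the intersection of two positive-probability events that are in fact the same event once one notes $\sum_{i\in\sigma^c} v_i\otimes v_i$ has the same law). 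Combining the two displays yields $\|2\sum_{i\in\sigma} u_i\otimes u_i - A\| \le \gamma$ for some deterministic choice of $\sigma$.

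It remains to arrange $|\sigma|\le m/2$. This does not come for free from the Bernoulli model, so I would instead fix the cardinality: let $\sigma$ be a uniformly random subset of $[m]$ of size exactly $\lfloor m/2\rfloor$ and set $v_i = \sqrt{2}\cdot \mathbf{1}_{\{i\in\sigma\}} u_i$; these vectors are no longer independent, which is the one genuine obstacle. To fix this I would instead split $[m]$ into the union of a subset and its complement and run the argument so that the bound holds for \emph{whichever} of $\sigma, \sigma^c$ has size $\le m/2$ — precisely the trick behind \cite[Corollary 1.5]{MSS}: apply Proposition \ref{prp-gen-mss} in its symmetric two-partition form (which the remark after the Corollary statement invokes — "can be generalized to any number of partitions"), obtain with positive probability a partition $[m] = \sigma_1 \sqcup \sigma_2$ with $2\sum_{i\in\sigma_j} u_i\otimes u_i \preceq A + \gamma I_n$ for \emph{both} $j=1,2$, hence also $\succeq A - \gamma I_n$ for both by the complementation identity above, and finally keep the $\sigma_j$ with $|\sigma_j|\le m/2$. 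The main thing to be careful about is this independence/cardinality issue; the matrix manipulations are routine once the correct random model is in place.
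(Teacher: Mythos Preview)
Your proposal contains a genuine gap in the middle paragraph. Proposition~\ref{prp-gen-mss} is purely existential: it produces \emph{one} realization of the $v_i$'s for which $\sum_i v_i\otimes v_i \preceq A+\gamma I_n$. With your Bernoulli model $v_i=\sqrt{2}\,\xi_i u_i$ this yields a set $\sigma=\{i:\xi_i=1\}$ with $2\sum_{i\in\sigma}u_i\otimes u_i\preceq A+\gamma I_n$, and nothing more. To get the lower bound you need $2\sum_{i\in\sigma^c}u_i\otimes u_i\preceq A+\gamma I_n$ for the \emph{same} $\sigma$, and here your justification breaks down: the events $E_1=\{2\sum_{i\in\sigma}u_i\otimes u_i\preceq A+\gamma I_n\}$ and $E_2=\{2\sum_{i\in\sigma^c}u_i\otimes u_i\preceq A+\gamma I_n\}$ are \emph{not} the same event; they merely have the same probability by the symmetry $\xi\mapsto 1-\xi$. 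Two events of positive probability need not intersect, so you cannot conclude that both bounds hold simultaneously.

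You correctly sense this in the last paragraph and point to the two-partition mechanism behind \cite[Corollary~1.5]{MSS}, which is exactly what the paper does---but you stop short of executing it. The missing idea is to force simultaneity by doubling the ambient space: set $w_{i,1}=\binom{u_i}{0_n}$, $w_{i,2}=\binom{0_n}{u_i}$ in $\C^{2n}$, and let $v_i=\sqrt{2}\,w_{i,j}$ with $j\in\{1,2\}$ chosen independently and uniformly. Then $\E\sum v_i\otimes v_i=\mathrm{diag}(A,A)$ and $\E\|v_i\|_2^2\le 2\delta$, so a \emph{single} application of Proposition~\ref{prp-gen-mss} gives one realization in which the top-left block reads $2\sum_{i\in\sigma}u_i\otimes u_i\preceq A+\gamma I_n$ and the bottom-right block reads $2\sum_{i\in\sigma^c}u_i\otimes u_i\preceq A+\gamma I_n$ simultaneously. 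The complementation identity then yields the two-sided bound for both $\sigma$ and $\sigma^c$, and you keep whichever has cardinality at most $m/2$. This also explains why the constant is $\gamma(2\delta,A)$ rather than $\gamma(\delta,A)$: in the doubled model each $v_i$ genuinely has $\E\|v_i\|_2^2=2\|u_i\|_2^2\le 2\delta$, whereas in your Bernoulli model the expected squared norm was only $\|u_i\|_2^2$ and the appearance of $2\delta$ was artificial.
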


\begin{proof}
For any $i \le m$, define the following $2n$ dimensional vectors 
$$
w_{i, 1}=\begin{pmatrix}
u_i \\
0_n\\
\end{pmatrix} \quad \text{and}\quad w_{i, 2}=\begin{pmatrix}
0_n\\
u_i \\
\end{pmatrix}
$$
where $0_n$ is the $0$-vector in $\C^n$.

\vskip 10pt

Let $v_1, \dots, v_m \in \C^{2n}$ be independent random vectors so that $v_i=\sqrt2w_{i, j}$ with probability $1/2$ for $j\in\{1, 2\}$. Clearly, we have 
$$
\E\sum_{i \le m} v_i \otimes v_i =
\begin{pmatrix}
A & 0\\
0 & A\\
\end{pmatrix}:=B \quad \text{and} \quad \E\|v_i\|_2^2 \le 2\delta .
$$

Moreover $\|B\|=\|A\|$. Applying Proposition \ref{prp-gen-mss}, we find a realization of the $v_i$'s so that 
$$
\sum_{i \le m} v_i \otimes v_i \preceq B + \gamma I_n
$$
where $\gamma=\gamma(2\delta, A)$. Therefore, there exists $\sigma$ with $|\sigma|\le m/2$ so that 
$$
2 \sum_{i\in\sigma} u_i \otimes u_i \preceq A + \gamma I_n \quad \text{and} \quad 2 \sum_{i\in\sigma^c} u_i \otimes u_i \preceq A + \gamma I_n .
$$

Since $A=\sum_{i\in\sigma} u_i \otimes u_i + \sum_{i\in\sigma^c} u_i \otimes u_i $ we deduce 
$$
A-\gamma I_n\preceq 2 \sum_{i\in\sigma} u_i \otimes u_i \preceq A + \gamma I_n
$$
which finishes the proof.
\end{proof}

We are now ready to prove the first main result of this paper. Its proof uses an iterative procedure based on Corollary \ref{cor-gen-mss}. 

\subsection{Proof of Theorem~\ref{thm-main}}
Denote by $(a_i)_{i \le m}$ the columns of $A$, by $(x_i)_{i \le m}$ its normalized columns, and for any $i \le m$ put $c_i=\|a_i\|_2^2$. 
It is easy to check that 
\begin{align} \label{eq-matrix-B}
B:= AA^*= \sum_{i \le m} c_ix_i \otimes x_i.
\end{align}

By a standard splitting argument, one may assume that all the scalars $c_i$ are equal to $\kappa=\tr(B)/M$ with $M\in\N$ satisfying 
\begin{align} \label{eq-m-big}
M\geq \frac{72\, \tr(B)}{\e^2\|B\|}\quad \text{, i.e.}\quad \sqrt{\frac{2\kappa}{\|B\|}} \le \frac{\e}{6}
\end{align}
so that $B$ can be rewritten as a sum of rank one matrices with equal weights, that is, $B=\kappa \sum_{j=1}^M y_j \otimes y_j$, where $y_j\in\{x_1,\dots,x_m\}$. Note that it's enough to assume having all weights of the same order, this will affect only the constants at the end.

\vskip 10pt

Define $k\in\N$ as the largest integer satisfying 
\begin{align} \label{eq-choice-k}
\frac{M}{2^k} \ge \frac{144}{(\sqrt{2}-1)^2}\frac{\tr(B)}{\e^2 \|B\|}=\frac{144}{\e^2(\sqrt{2}-1)^2}\, \srank(A).
\end{align}

Denote $\sigma_0=[M]$ and $u_i=\sqrt{\kappa}y_i$ so that $B_0:=B=\sum_{i\in \sigma_0} u_i \otimes u_i $ and $\|u_i\|_2^2 \le \kappa$ for any $i\in\sigma_0$.

\vskip 10pt

Now apply Corollary \ref{cor-gen-mss} to find $\sigma_1\subset \sigma_0$ with $|\sigma_1|\le M/2$ so that 
\begin{align} \label{eq-start-iteration}
\left\|2\sum_{i\in\sigma_1} u_i \otimes u_i -B_0 \right\|\le \|B_0\|\left[ \left(1 + \sqrt{2\kappa/\|B_0\|} \right)^2-1 \right].
\end{align}

Denote $B_1=\sum_{i\in \sigma_1} u_i \otimes u_i $ and $\alpha_1=\left(1 + \sqrt{2\kappa/\|B_0\|} \right)^2$. Note that $(2-\alpha_1) \|B_0\|\le \|2B_1\|\le \alpha_1 \|B_0\|$. Applying Corollary \ref{cor-gen-mss}, we find $\sigma_2\subset \sigma_1$ with $|\sigma_2|\le |\sigma_1|/2 \le M/4$ so that 
\begin{align} \label{eq-pre-step2-iteration}
\left\|2\sum_{i\in\sigma_2} u_i \otimes u_i -B_1 \right\|\le \|B_1\|\left[ \left(1 + \sqrt{2\kappa/\|B_1\|}\right)^2-1 \right].
\end{align}

Combining \eqref{eq-start-iteration} and \eqref{eq-pre-step2-iteration} we deduce 
$$
(2-\alpha_1\alpha_2) \|B_0\|\le \|2^2B_2\|\le \alpha_1\alpha_2 \|B_0\|
$$
where $B_2=\sum_{i\in\sigma_2} u_i \otimes u_i $ and $\alpha_2=\left( 1 + \sqrt{2\kappa/\|B_1\|} \right)^2$.

\vskip 10pt

Therefore, we construct by induction 
$\sigma_k\subset\sigma_{k-1}\subset\dots\subset\sigma_0$ satisfying $|\sigma_\ell| \le |\sigma_{\ell-1}|/2$ for any $\ell \le k$ and 
$$
\left\|2^\ell B_\ell-B \right\|\le \|B\|\left[ \prod_{i=1}^\ell \alpha_i-1 \right]
$$
where $B_{\ell}=\sum_{i\in\sigma_\ell} u_i \otimes u_i $ and $\alpha_i=\left(1 + \sqrt{2\kappa/\|B_{i-1}\|} \right)^2$ for any $1 \le i \le \ell$. Moreover 
\begin{align} \label{eq-induction-norms}
(2-\prod_{i=1}^\ell \alpha_i) \|B\|\le \|2^\ell B_\ell\|\le \|B\|\prod_{i=1}^\ell \alpha_i. 
\end{align}

We will show by induction that $\beta_{\ell}:=\prod_{i=1}^\ell\alpha_i \le 1 + \e/2$ for any $\ell \le k$. By \eqref{eq-m-big}, we have that $\alpha_1 \le 1 + \e/2$. Let $\ell <k$ and suppose that $\beta_{i} \le 1 + \e/2$ for any $i \le \ell$. We need to show that $\beta_{\ell + 1} \le 1 + \e/2$. Note that \eqref{eq-induction-norms} together with the induction hypothesis imply that $\|B_i\|\ge 2^{-i} (1-\e/2) \|B\|$ for any $i \le \ell$. Therefore, 
\begin{align*}
\ln \beta_{\ell + 1}&=2\sum_{i=1}^{\ell + 1} \ln \left(1 + \sqrt{2\kappa/\|B_{i-1}\|}\right) L\le 2\sum_{i=1}^{\ell + 1}\sqrt{\frac{2\kappa}{\|B_{i-1}\|}} \\
& \le 2\sum_{i=1}^{\ell + 1}\sqrt{\frac{2^i\kappa}{(1-\frac{\e}{2})\|B\|}}=\frac{2\sqrt{2}}{\sqrt{2}-1}\sqrt{\frac{2^{\ell + 1}\kappa}{(1-\frac{\e}{2})\|B\|}}.
\end{align*}

Since $\ell<k$ then using \eqref{eq-choice-k} and $\e/3 \le \ln(1 + \e/2)$ we deduce that $\ln \beta_{\ell + 1} \le \ln(1 + \e/2)$ and finish the induction. Thus, we found $\sigma_k$ with $|\sigma| \le M/2^k$ so that 
$$
\left\|2^k \sum_{i\in\sigma_k} u_i \otimes u_i -B \right\|\le \frac{\e}{2} \|B\| .
$$

Finally, replacing $u_i$ by $\sqrt{\kappa}y_i$ and using \eqref{eq-choice-k} completes the proof of Theorem \ref{thm-main}. $\square$

\vskip 10pt

Now, we turn to the proof of the second main result which will follow after a careful matrix analysis. We should emphasize again that its value lies in its combination with Theorem~\ref{thm-main} as it will be shown in the next sections. 

\subsection{Proof of Theorem~\ref{thm-sparse-with-constraints}}
The first assertion is trivial since 
$$
P_{n} BB^*P_{n}=AA^* \quad \text{and} \quad P_{n} BDB^*P_{n}=ADA^*
$$
and the same for goes $V^*$.

\vskip 10pt

Application of Schur complement, and the fact that $B, A, V^*\in \aprox_\e D$ yield
\begin{align} \label{eq-rank2-ineq}
K :=\begin{pmatrix}
2\e AA^* & (1 + \e)AV-ADV \\
(1 + \e)V^*A^*-V^*DA^* & 2\e V^*V
\end{pmatrix}
\succeq 0. 
\end{align}

Let $w\in S^{n-1}$ and $\lambda\in\R$. For any $1 \le i \le k$ define the $(n + k)$ dimensional vector $w_i(\lambda)$ given by 
$$
w_i(\lambda)^*=\left(\left((AA^*)^{-\frac{1}{2}}w\right)^*\Big|(\lambda e_i)^* \right), \quad e_i \in \R^k. 
$$

From \eqref{eq-rank2-ineq}, we have $\left\langle Kw_i(\lambda), w_i(\lambda)\right\rangle \ge 0$ for any $1 \le i \le k$ and any $\lambda\in\R$. Therefore, for any $1 \le i \le k$
$$
\e \|v_i\|_2^2 \lambda^2 + \left[(1 + \e) \left\langle (AA^*)^{-\frac{1}{2}}Av_i, w\right\rangle-\left\langle (AA^*)^{-\frac{1}{2}} ADv_i, w\right\rangle \right]\lambda + \e \ge 0. 
$$

Since this should be true for any $\lambda$ then 
$$
\left| \left\langle (AA^*)^{-\frac{1}{2}}ADv_i, w\right\rangle-(1 + \e) \left\langle (AA^*)^{-\frac{1}{2}}Av_i, w \right\rangle \right|\le 2\e \|v_i\|_2.
$$

Since this is true for any $w\in S^{n-1}$ then \eqref{eq-ADv} follows. This completes the proof of Theorem \ref{thm-sparse-with-constraints}.
$\square$

\bigskip

\section{Sparsification with equal weights} \label{sec-new-bss}

A nice consequence of Theorem \ref{thm-main} concerns sparsification results. 
Given an identity decomposition $I_n=\sum_{i \le m} v_i \otimes v_i$, \, $v_1, \dots, v_m\in\R^n$, the goal is to sparsify this decomposition by reducing the number of vectors used while keeping the corresponding quadratic form almost the same. This is addressed in the following corollary of Theorem~\ref{thm-main}. 

\begin{cor} \label{cor-new-bss}
Let $v_1, \dots, v_m\in\R^n$ with $\sum_{i \le m} v_i \otimes v_i=I_n$. Then for any $\e>0$ there exists a multi-set $\sigma \subset [m]$ with $|\sigma| \le {n}/{c\e^2}$ so that 
$$
(1-\e)I_n \preceq \frac{n}{|\sigma|}\sum_{i\in\sigma} \frac{v_i}{\|v_i\|_2} \otimes \frac{v_i}{\|v_i\|_2} \preceq (1+\e)I_n
$$
where $c>0$ is an absolute constant. Moreover, if all the $v_i$'s have the same Euclidean norm, then the above holds with $\sigma$ being a set.
\end{cor}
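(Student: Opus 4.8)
The plan is to deduce this from Theorem~\ref{thm-main} applied to the matrix whose columns are the $v_i$'s. Let $A$ be the $n\times m$ matrix with columns $v_1,\dots,v_m$ (we may discard any zero vectors, which changes nothing). Then $AA^*=\sum_{i\le m}v_i\otimes v_i=I_n$, so $\|A\|^2=\|AA^*\|=1$ and $\|A\|_{\HS}^2=\tr(AA^*)=n$, whence $\srank(A)=n$. The columns of $\widetilde A$ are the unit vectors $x_i:=v_i/\|v_i\|_2$, and for a multi-set $\sigma\subset[m]$ one has $\widetilde A_\sigma\widetilde A_\sigma^*=\sum_{i\in\sigma}x_i\otimes x_i$ (counted with multiplicities).

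I would then apply Theorem~\ref{thm-main} with $\e$ replaced by $\e/6$, obtaining a multi-set $\sigma\subset[m]$ with $|\sigma|\le 36\,n/(c\e^2)$ and
$$
\left\|c(\e/6)^2\sum_{i\in\sigma}x_i\otimes x_i-I_n\right\|\le \e/6 .
$$
It remains to replace the fixed normalization $c(\e/6)^2$ by the averaged one $n/|\sigma|$ demanded in the statement. For this I would take traces: since $\tr(x_i\otimes x_i)=1$ and $|\tr M|\le n\|M\|$, the displayed bound forces $c(\e/6)^2|\sigma|=n(1+\theta)$ with $|\theta|\le\e/6$. Multiplying the displayed inequality by $n/\big(c(\e/6)^2|\sigma|\big)=1/(1+\theta)$ then shows that every eigenvalue of $\frac{n}{|\sigma|}\sum_{i\in\sigma}x_i\otimes x_i$ lies in $\big[\tfrac{1-\e/6}{1+\e/6},\tfrac{1+\e/6}{1-\e/6}\big]\subset[1-\e,1+\e]$, which is exactly the required two-sided operator inequality; the bound on $|\sigma|$ has the claimed form after renaming the absolute constant $c$.

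For the last assertion, if all the $v_i$ have the same Euclidean norm $\kappa$ then $A=\kappa\widetilde A$, so the corresponding clause of Theorem~\ref{thm-main} applies and $\sigma$ can be taken to be a genuine set. The only point requiring a little care is the trace-and-rescale step reconciling the two normalizations; but this is routine once one notices that shrinking $\e$ by a universal factor costs only a worse absolute constant, so I do not expect any real obstacle. (As usual the statement is of interest only for $\e$ below an absolute threshold, which we assume throughout.)
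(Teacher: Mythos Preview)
Your argument is correct and follows essentially the same route as the paper: apply Theorem~\ref{thm-main} to the matrix $A=(v_1,\dots,v_m)$ with $AA^*=I_n$, then take traces (using that each $x_i\otimes x_i$ has trace $1$) to pin down the normalizing constant and rescale from the fixed coefficient to $n/|\sigma|$. The only cosmetic difference is that the paper re-enters the proof of Theorem~\ref{thm-main} to work with the explicit factor $2^k$ from the iteration rather than the packaged constant $c\e^2$, but the trace-and-rescale step is identical in spirit and effect.
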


The conclusion of this corollary can be reformulated as follows: there exists a sequence of non-negative integers $(\kappa_i)_{i\leq m}$ with $\sum_{i\leq m} \kappa_i \leq {n}/{c\e^2}$ such that 
$$
(1-\e)I_n \preceq \frac{n}{\sum_{i\leq m}\kappa_i} \sum_{i\leq m} \kappa_i \frac{v_i}{\|v_i\|_2} \otimes \frac{v_i}{\|v_i\|_2} \preceq (1+\e)I_n. 
$$
Moreover, when the $v_i$'s have the same Euclidean norm, then all positive weights are equal to $\frac{n}{m\, \|v_i\|_2^2}$, i.e. $\kappa_i= 1$ for any $i$ such that $\kappa_i\neq 0$. 

\vskip 10pt

We should note that a similar sparsification result, with no information on the weights, was previously obtained by Batson, Spielman and Srivastava \cite{BSS} by a different method (see also \cite[Appendix~F]{HO} where a similar statement appears implicitly). 
Thanks to Theorem~\ref{thm-main}, the above corollary will follow easily. 

\begin{proof}[Proof of Corollary~\ref{cor-new-bss}] 
The proof is based on applying Theorem \ref{thm-main} with the $n\times m$ matrix $A=(v_1,\dots,v_m)$, and use that $AA^*=I_n$.

\vskip 10pt

More precisely, following the previous proof, the iterative process, with $B=AA^*=I_n$ (as defined in \eqref{eq-matrix-B}), one finds $\sigma_k$ with $|\sigma| \le M/2^k$ so that 
\begin{equation}\label{eq-rem-cor}
\left(1-\frac{\e}{2}\right)I_n\preceq 2^k\sum_{i\in\sigma_k} u_i \otimes u_i \preceq \left(1+\frac{\e}{2}\right)I_n.
\end{equation}

Taking the trace on all sides and using that $\|u_i\|_2^2= n/M$, we deduce 
$$
\left(1-\frac{\e}{2}\right) \le 2^k\frac{|\sigma_k|}{M} \le \left(1+\frac{\e}{2}\right) .
$$

Therefore, 
$$
2^k \sum_{i\in\sigma_k} u_i \otimes u_i\preceq \frac{n}{|\sigma_k|} \sum_{i\in\sigma_k} y_i \otimes y_i \preceq \frac{2^k}{1-\e/2} \sum_{i\in\sigma_k} u_i \otimes u_i .
$$

This, together with \eqref{eq-rem-cor}, shows that 
$$
(1-\e/2)I_n\preceq \frac{n}{|\sigma_k|} \sum_{i\in\sigma_k} y_i \otimes y_i\preceq \frac{1+\e/2}{1-\e/2} I_n
$$
which concludes the proof of Corollary \ref{cor-new-bss}.
\end{proof}

\bigskip

\section{Approximate John's decompositions} \label{sec-john}

Given an arbitrary convex body $K$ in $\R^n$, John's theorem \cite{J} states that among the ellipsoids contained in $K$, there exists a unique ellipsoid of maximal volume. This ellipsoid is called the John's ellipsoid of $K$. If the John's ellipsoid of $K$ happens to be $B_2^n$, the body $K$ is said to be in John's position. For any $K$ there is a linear invertible transformation $T:\R^n\to \R^n$ so that $TK$ is in John's position. If $K$ is in John's position then there are $m\le n(n+3)/2$ contact points $x_1, \dots, x_m\in \partial K \cap \partial B_2^n$ and positive weights $c_1, \dots, c_m$ so that
\begin{equation} \label{eq-john}
\sum_{i=1}^m c_i x_i\otimes x_i=I_n , \quad \sum_{i=1}^m c_i x_i=0. 
\end{equation}

We call this collection $\{x_i, c_i\}_{i\le m}$ a John's decomposition of the identity (of the body $K$).

\vskip 10pt

An important problem is to approximate a John's decomposition by extracting vectors from $\{x_i,c_i\}$ (of course, as less as possible) so that their decomposition is still close to the identity (clearly the corresponding weights have to be adapted as well) and their disposition is still close to being balanced. Note that in the symmetric case, the second condition in \eqref{eq-john} trivially holds, as if $x$ is a contact point then $-x$ is as well. Thus, in the symmetric case, Corollary~\ref{cor-new-bss} gives a possible solution to this problem as the balancing condition can be obtained automatically. However, in the non-symmetric case, the condition $\sum_{i=1}^m c_i x_i = 0$ is meaningful and we regard it, in this context, as a constraint.

\vskip 10pt

The following theorem addresses exactly this situation which can be seen as a constraint approximation problem. 

\begin{thm} \label{thm-equal-john}
Let $\{x_i, c_i\}_{i\le m}$ be a John's decomposition of the identity. Then for any $\e>0$ there exists a multi-set $\sigma \subset [m]$ with $|\sigma| \le {n}/{c\e^2}$ so that 
$$
(1-\e) I_n \preceq \frac{n}{|\sigma|} \sum_{i\in\sigma} (x_i-u)\otimes (x_i-u)\preceq (1 + \e) I_n
$$
where $u=\frac{1}{|\sigma|} \sum_{i\in\sigma} x_i$ satisfies $\|u\|_2\le \frac{2\e}{3\sqrt{n}}$, and $c>0$ is an absolute constant.
\end{thm}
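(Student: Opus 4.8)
The plan is to realize Theorem~\ref{thm-equal-john} as an instance of the constraint approximation scheme: encode the balancing condition $\sum_i c_i x_i = 0$ as an extra block in a slightly larger matrix, apply Theorem~\ref{thm-main} to extract $O(n/\e^2)$ columns with equal weights, and then use Theorem~\ref{thm-sparse-with-constraints} to transfer the balancing information to the extracted sub-decomposition. Concretely, let $A$ be the $n\times m$ matrix with columns $\sqrt{c_i}\,x_i$, so $AA^* = I_n$ by \eqref{eq-john}, and let $V$ be the $m\times 1$ matrix with entries $\sqrt{c_i}$ (a single constraint, $k=1$); then the row vector $V^*A^* = \sum_i c_i x_i^* = 0$, which is exactly the John balancing condition phrased as ``$v_1$ is in the kernel of $A$''. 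Form $B$ with $B^* = (A^* \mid V)$ as in Theorem~\ref{thm-sparse-with-constraints}. Since all the $x_i$ are unit contact points, the columns of $A$ all have the same Euclidean norm when the weights are equal, so after the standard splitting argument the ``moreover'' clause of Theorem~\ref{thm-main} applies and $\sigma$ is an honest set (equal weights).

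The key steps, in order: (1) apply Theorem~\ref{thm-main} to $B$ with parameter $\e$ (up to absolute constants), obtaining a multi-set $\sigma$ with $|\sigma| \le \srank(B)/c\e^2$ and a diagonal weight matrix $D = c\e^2\|B\|^2 \widetilde{B}_\sigma \widetilde{B}_\sigma^* $-type object so that $B \in \aprox_{\e} D$ after rescaling — here one checks $\srank(B) = O(n)$ since $\tr(BB^*) = \tr(AA^*) + \tr(V^*V) = n + \sum_i c_i = n + n = 2n$ (using $\tr\sum c_i x_i\otimes x_i = n$ forces $\sum c_i = n$) and $\|B\| \ge \|A\| = 1$; (2) feed this $D$ into Theorem~\ref{thm-sparse-with-constraints}, which gives $A \in \aprox_\e D$ and, crucially, inequality \eqref{eq-ADv} for the single constraint $v_1$: $\|(AA^*)^{-1/2} A[D - (1+\e)I]v_1\|_2 \le 2\e\|v_1\|_2$; (3) since $AA^* = I_n$, the left side is $\|A D v_1 - (1+\e) A v_1\|_2 = \|ADv_1\|_2$ because $Av_1 = \sum_i \sqrt{c_i}\cdot\sqrt{c_i}\, x_i = \sum c_i x_i = 0$, and $\|v_1\|_2^2 = \sum c_i = n$, so $\|ADv_1\|_2 \le 2\e\sqrt n$; (4) unwind $ADv_1$: with $D$ supported on $\sigma$ with equal entries (value $n/|\sigma|$ after the normalization that makes $A\in\aprox_\e D$ read $(1-\e)I \preceq \frac{n}{|\sigma|}\sum_{i\in\sigma} x_i\otimes x_i \preceq (1+\e)I$), one gets $ADv_1 = \frac{n}{|\sigma|}\sum_{i\in\sigma} \sqrt{c_i}\cdot\sqrt{c_i}\,x_i$ up to the equal-weight identification — so $\|\sum_{i\in\sigma} x_i\|_2 = \frac{|\sigma|}{n}\|ADv_1\|_2 \le \frac{|\sigma|}{n}\cdot 2\e\sqrt n$, i.e. $\|u\|_2 = \frac{1}{|\sigma|}\|\sum_{i\in\sigma}x_i\|_2 \le \frac{2\e}{3\sqrt n}$ after absorbing constants and re-choosing $\e$; (5) finally expand $\frac{n}{|\sigma|}\sum_{i\in\sigma}(x_i - u)\otimes(x_i-u) = \frac{n}{|\sigma|}\sum_{i\in\sigma} x_i\otimes x_i - \frac{n}{|\sigma|}\cdot|\sigma|\, u\otimes u = \frac{n}{|\sigma|}\sum_{i\in\sigma}x_i\otimes x_i - n\, u\otimes u$, and note $\|n\,u\otimes u\| = n\|u\|_2^2 \le \frac{4\e^2}{9}$, so the centered sum differs from $\frac{n}{|\sigma|}\sum x_i\otimes x_i$ by an operator of norm $O(\e^2)$, which is absorbed into the $(1\pm\e)I_n$ sandwich from step (1) at the cost of replacing $\e$ by a constant multiple of itself.

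The main obstacle is bookkeeping rather than conceptual: one must track how the normalizations in Theorem~\ref{thm-main} (the factor $c\e^2\|A\|^2$, the passage from $\widetilde A_\sigma$ to genuine weights) interact with the $(1+\e)I_n$ shift appearing in \eqref{eq-ADv} and with the equal-weights ``moreover'' clauses, making sure the final $\sigma$ is a true set and the constant in $\|u\|_2 \le \frac{2\e}{3\sqrt n}$ comes out clean. In particular one needs $\srank(B)$ genuinely $O(n)$ — which holds because $\tr(V^*V) = \sum_i c_i = n$ by taking the trace of the first relation in \eqref{eq-john} — and one must verify that the equal-norm hypothesis really is met: the columns of $A$ are $\sqrt{c_i}x_i$ with $\|x_i\|_2 = 1$, so after the splitting in \eqref{eq-m-big} all the pieces $y_j$ are the normalized vectors $x_i$, which all have norm $1$; the appended constraint vectors $\tilde u_{i-m}$ from Proposition~\ref{prp-gen-mss} need the same treatment, but since $V^*V$ is a $1\times 1$ block this is a single eigenvalue split uniformly. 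Everything else is the matrix-analytic perturbation argument already packaged in the two main theorems, invoked with $\e$ rescaled by a fixed absolute constant so that all the $O(\e)$ and $O(\e^2)$ errors collapse into the stated form.
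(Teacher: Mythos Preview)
Your overall strategy matches the paper's: lift the balancing constraint into an extra coordinate, sparsify the enlarged matrix, then use Theorem~\ref{thm-sparse-with-constraints} (in the paper, via its specialization Corollary~\ref{cor-kernel}) to control both the quadratic form and the barycenter. However, there is a genuine gap in your execution.

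With your choice $V=(\sqrt{c_i})_{i\le m}$ one has $V^*V=\sum_i c_i=n$, so
\[
BB^*=\begin{pmatrix} I_n & 0\\ 0 & n\end{pmatrix},
\]
which is \emph{not} a scalar multiple of the identity. Two things go wrong. First, $\|B\|^2=n$, hence $\srank(B)=\tr(BB^*)/\|B\|^2=2n/n=2$, not of order $n$; your inequality $\|B\|\ge 1$ only gives an upper bound on $\srank(B)$ and does not prevent it from collapsing. Second, and more importantly, Theorem~\ref{thm-main} yields
\[
\|BDB^*-BB^*\|\le \e\,\|B\|^2=\e n,
\]
which on the $I_n$ block says only $(1-\e n)I_n\preceq ADA^*\preceq (1+\e n)I_n$; this is useless and certainly does not give $B\in\aprox_\e D$. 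The passage from the operator-norm estimate of Theorem~\ref{thm-main} to the L\"owner-relative estimate $\aprox_\e$ requires $BB^*$ to be a scalar multiple of the identity.

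The fix is exactly what the paper does: take $v=(\sqrt{c_i/n})_{i\le m}$ so that $\|v\|_2=1$ and $BB^*=I_{n+1}$. One then applies Corollary~\ref{cor-new-bss} (the isotropic, equal-weights form of Theorem~\ref{thm-main}) with $\e/3$ to obtain $B\in\aprox_{\e/3}D$ with $d_{ii}=\kappa_i n/(c_i|\sigma|)$, and Corollary~\ref{cor-kernel} to conclude. With this normalization $ADv=\sqrt n\,u$ and $\|v\|_2=1$, so $\|u\|_2\le \tfrac{2\e}{3\sqrt n}$ drops out directly, and the centered sum is handled via the matrix $C$ of Corollary~\ref{cor-kernel} (your hand computation in step~(5) is equivalent). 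Once you insert the $1/\sqrt n$ normalization, the rest of your outline is correct and coincides with the paper's argument.
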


It may be useful to note that the conclusion of the above theorem can be formulated as follows. There exists a sequence of non-negative integers $\{\kappa_i\}_{i \le m}$ with $\sum_{i \le m} \kappa_i \le {n}/{c\e^2}$ so that 
$$
(1-\e) I_n \preceq \frac{n}{\sum_{i \le m} \kappa_i} \sum_{i \le m} \kappa_i (x_i-u)\otimes (x_i-u)\preceq (1 + \e) I_n
$$
where $u=\frac{ \sum_{i \le m}\kappa_i x_i}{\sum_{i \le m} \kappa_i}$ satisfies $\|u\|_2\le \frac{2\e}{3\sqrt{n}}$, and $c>0$ is an absolute constant. Moreover, if all the $c_i$'s are equal then all the non-zero $\kappa_i$'s are equal to $1$.

\vskip 10pt

We should note that this improves on Srivastava's theorem \cite[Theorem 5]{Sr} in three ways. First we obtain an approximation whose ratio $(1+\e)/(1-\e)$ can be made arbitrary close to $1$ while in Srivastava's result one could only get a $(4+\e)$-approximation. The second improvement concerns the dependence on $\e$ in the estimate of the norm of $u$: Srivastava obtains a similar bound with $\e$ replaced by $\sqrt{\e}$. Finally, Theorem \ref{thm-equal-john} gives an explicit expression of the weights appearing in the approximation.

\vskip 10pt

In Sections \ref{sec-contact}-\ref{sec-dvor} we present two applications of geometric flavor of Theorem \ref{thm-equal-john}. In these applications the fact that there is a control on the magnitude of the weights in the approximate John's decompositions is crucial.

\vskip 10pt

Before we proceed with the proof of Theorem \ref{thm-equal-john}, we need the following corollary of Theorem~\ref{thm-sparse-with-constraints}. 

\begin{cor} \label{cor-kernel}
Let $A$ be an $n\times m$ matrix and let $v\in \ker A$. Consider the $(n + 1)\times m$ matrix $B$ given by $B^*=(A^*\mid v)$. Let $\e > 0$ and let $D$ be an $m\times m$ diagonal matrix so that $B\in \aprox_\e D$. Then $A, v\in \aprox_\e D$ and 
$$
\left\|(AA^*)^{-\frac{1}{2}}ADv \right\|_2 \le 2\e \|v\|_2.
$$
Moreover, if $C=AD^{\frac{1}{2}}-\frac{1}{\|D^{\frac{1}{2}}v\|_2^2}ADv\otimes D^{\frac{1}{2}} v$ then $D^{\frac{1}{2}}v\in \ker C$ and 
$$
\left(1-\e-\frac{4\e^2}{1-\e} \right) AA^*\preceq CC^*\preceq (1 + \e) AA^*. 
$$
\end{cor}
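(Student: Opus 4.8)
The first assertions follow immediately from Theorem~\ref{thm-sparse-with-constraints} applied with $k=1$, $v_1 = v$: since $v \in \ker A$ we have $Av = 0$, so the term $(1+\e)Av$ in \eqref{eq-ADv} vanishes and the estimate $\|(AA^*)^{-\frac12}ADv\|_2 \le 2\e\|v\|_2$ is exactly the specialization of \eqref{eq-ADv}. The statement $A, v \in \aprox_\e D$ is likewise the first assertion of Theorem~\ref{thm-sparse-with-constraints} (with $V^* = v^*$ a single row). So the only real content is the last two claims about $C$.

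For $D^{\frac12}v \in \ker C$: I would just compute $C D^{\frac12} v = AD^{\frac12}\cdot D^{\frac12}v - \frac{1}{\|D^{\frac12}v\|_2^2}(ADv)\langle D^{\frac12}v, D^{\frac12}v\rangle = ADv - ADv = 0$, which is a one-line check. The point of subtracting the rank-one correction $\frac{1}{\|D^{\frac12}v\|_2^2}ADv \otimes D^{\frac12}v$ from $AD^{\frac12}$ is precisely to kill the image of $D^{\frac12}v$ while perturbing the operator as little as possible; the estimate $\|ADv\|_2 \le 2\e\|v\|_2 \cdot \|(AA^*)^{1/2}\|$-type bound just obtained controls how small that correction is.

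For the sandwich inequality on $CC^*$: writing $C = AD^{\frac12} - R$ with $R = \frac{1}{\|D^{\frac12}v\|_2^2}(ADv)\otimes (D^{\frac12}v)$, I would expand
$$
CC^* = AD A^* - AD^{\frac12}R^* - R D^{\frac12}A^* + RR^*.
$$
Using $B \in \aprox_\e D$ and $P_n B D B^* P_n = ADA^*$ (as in the proof of Theorem~\ref{thm-sparse-with-constraints}) gives $(1-\e)AA^* \preceq ADA^* \preceq (1+\e)AA^*$, which handles the main term; in particular the upper bound $CC^* \preceq ADA^* + (\text{correction terms})$ will need the cross terms to have the right sign or be absorbed — here note $AD^{\frac12}R^* + RD^{\frac12}A^* - RR^* = R'$ where one can check $AD^{\frac12}R^* = \frac{1}{\|D^{\frac12}v\|_2^2}(ADv)\otimes(ADv) = RR^* \cdot \frac{\|ADv\|_2^2 \cdot (\ldots)}{}$, so in fact $AD^{\frac12}R^* = RR^*$ is positive semidefinite and symmetric, and the three correction terms collapse to $-RR^* \preceq 0$, giving $CC^* \preceq ADA^* \preceq (1+\e)AA^*$ for free. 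For the lower bound one needs $CC^* \succeq ADA^* - RR^* \succeq (1-\e)AA^* - RR^*$, so it remains to bound $\|RR^*\| = \frac{\|ADv\|_2^2}{\|D^{\frac12}v\|_2^2}$ from above by $\frac{4\e^2}{1-\e}\|AA^*\|$ relative to $AA^*$. Using $\|ADv\|_2 \le 2\e\|(AA^*)^{1/2}\|\,\|v\|_2$ from the first part gives $\|ADv\|_2^2 \le 4\e^2 \langle AA^* w, w\rangle$-type control, and the denominator $\|D^{\frac12}v\|_2^2 = \langle Dv, v\rangle = v^*Dv \ge (1-\e)\|v\|_2^2$ because $v \in \aprox_\e D$ forces $v^* B^* B v$-comparisons — more carefully, one writes $\|D^{\frac12}v\|^2_2 = \langle v, Dv\rangle$ and notes $B^*DB \succeq (1-\e)B^*B$ contracted against the vector realizing $v$ in the last coordinate block yields $\langle v, Dv\rangle \ge (1-\e)\|v\|_2^2$. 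Combining, $RR^* \preceq \frac{4\e^2}{1-\e}\|AA^*\| \,\frac{(AA^*)}{\|AA^*\|}$... — actually since $RR^*$ is rank one with range contained in the range of $A$, it is cleanest to bound $RR^* \preceq \frac{\|ADv\|_2^2}{\|D^{\frac12}v\|_2^2}\cdot \frac{AA^*}{\|AA^*\|}$ is false in general, so instead I would write $RR^* = \frac{\|ADv\|_2^2}{\|D^{\frac12}v\|_2^2} z\otimes z$ with $z$ a unit vector, use $z\otimes z \preceq \|(AA^*)^{-1}\| \cdot \frac{AA^*}{1}$ only if $z \in \mathrm{range}(A)$ — which it is — and then the cleanest path is the one the authors presumably take: contract everything against $(AA^*)^{-1/2}$, i.e. show $(AA^*)^{-1/2}CC^*(AA^*)^{-1/2} \succeq (1-\e)I_n - (AA^*)^{-1/2}RR^*(AA^*)^{-1/2}$ and bound the last term's norm by $\|(AA^*)^{-1/2}ADv\|_2^2/\|D^{1/2}v\|_2^2 \le (2\e\|v\|_2)^2/((1-\e)\|v\|_2^2) = \frac{4\e^2}{1-\e}$.

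The main obstacle is bookkeeping: correctly identifying that the three correction terms reduce to $-RR^*$ (so the upper bound is automatic) and then getting the lower bound with the stated constant $\frac{4\e^2}{1-\e}$, which hinges on two inputs already in hand — the bound $\|(AA^*)^{-1/2}ADv\|_2 \le 2\e\|v\|_2$ from the first part, and $\|D^{1/2}v\|_2^2 = \langle v, Dv\rangle \ge (1-\e)\|v\|_2^2$ from $v \in \aprox_\e D$. Neither step is deep; the care is in normalizing by $(AA^*)^{-1/2}$ on both sides before estimating.
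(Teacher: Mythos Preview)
Your proposal is correct and follows essentially the same approach as the paper: after invoking Theorem~\ref{thm-sparse-with-constraints}, you compute $CC^*=ADA^*-\frac{1}{\|D^{1/2}v\|_2^2}ADv\otimes ADv$ (your observation that the three correction terms collapse to $-RR^*$ is exactly this identity), get the upper bound for free since $RR^*\succeq 0$, and obtain the lower bound from $\|(AA^*)^{-1/2}ADv\|_2\le 2\e\|v\|_2$ together with $\|D^{1/2}v\|_2^2\ge(1-\e)\|v\|_2^2$. The paper writes the last step as $ADv\otimes ADv\preceq 4\e^2\|v\|_2^2\,AA^*$, which is precisely your ``conjugate by $(AA^*)^{-1/2}$'' argument unwound.
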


\begin{proof}
Let $A$ be an $n\times m$ matrix and $v\in \ker A$. Consider the $(n + 1)\times m$ matrix $B$ given by $B^*=(A^*\mid v)$. Let $\e > 0$ and let $D$ be an $m\times m$ diagonal matrix so that $B\in \aprox_\e D$.

By Theorem \ref{thm-sparse-with-constraints}, we have $A, v\in \aprox_\e D$. By the definition of $C$, we have $D^{\frac{1}{2}}v\in \ker C$. An easy calculation shows that 
$$
CC^*=ADA^*-\frac{1}{\left\|D^{\frac{1}{2}}v\right\|_2^2}ADv\otimes ADv. 
$$

This implies that $CC^*\preceq ADA^*\preceq (1 + \e)AA^*$. Since $v\in \ker A$ then by \eqref{eq-ADv}, we have
$$
\left\|(AA^*)^{-\frac{1}{2}}ADv \right\|_2 \le 2\e \|v\|_2.
$$

This implies that $(AA^*)^{-\frac{1}{2}}ADv\otimes ADv (AA^*)^{-\frac{1}{2}} \preceq 4\e^2 \|v\|_2^2 I_n$ which means that 
\begin{align} \label{eq-ADv-tensor}
ADv\otimes ADv \preceq 4\e^2 \|v\|_2^2 AA^*.
\end{align}

Since $v\in \aprox_\e D$ then $\|v\|_2^2\le \frac{\|D^{\frac{1}{2}} v\|_2^2}{1-\e}$. This together with \eqref{eq-ADv-tensor} gives 
$$
\frac{1}{\|D^{\frac{1}{2}}v\|_2^2}ADv\otimes ADv\preceq \frac{4\e^2}{1-\e} AA^*.
$$

This together with the fact that $ADA^*\succeq (1-\e) AA^*$ finishes the proof of Theorem \ref{cor-kernel}.
\end{proof}

Combining the previous statement with Corollary~\ref{cor-new-bss}, we will be able to prove Theorem~ \ref{thm-equal-john}.

\begin{proof}[Proof of theorem \ref{thm-equal-john}]
Let $\e>0$. Let $A=(\sqrt{c_i}x_i)$ be an $n\times m$ matrix and let $v$ be the $m$ dimensional vector with $\sqrt{c_i/n}$ as coordinates. Clearly, we have $AA^*=I_n$, and by the assumption $\sum_{i\le m} c_i x_i=0$ we have $v \in \ker A$.

Consider the $(n + 1)\times m$ matrix $B=(b_1,\dots,b_m)$ given by $B^*=(A^*\mid v)$, and note that $BB^*=I_{n + 1}$. This means that the columns of $B$, obtained by concatenating the original vectors of the decomposition together with the corresponding weights, form an identity decomposition\footnote{This fact was used by Ball in \cite{Ba} where he proves a reverse isoperimetric inequality.} in $\R^{n+1}$.

Applying Corollary \ref{cor-new-bss} with $b_i^*=(\sqrt{c_i}x_i^*|\sqrt{c_i/n})$ for any $i \le m$ and $\e/3$ instead of $\e$, we find a multi-set $\sigma \subset [m]$ with $|\sigma| \le {n}/{c\e^2}$ so that 
$$
\frac{n}{|\sigma|}\sum_{i\in\sigma} \frac{b_i \otimes b_i}{c_i} \simeq_{\frac{\e}{3}} I_{n+1} .
$$

This means that $B\in \aprox_{\frac{\e}{3}} D$, where $D$ is the diagonal matrix with entries defined as follows 
$$
d_{ii} = \frac{\kappa_i n}{c_i|\sigma|} 
$$
where $\kappa_i$ is the number of appearances of the index $i$ in $\sigma$. Recall that $\sigma$ is a mutliset of indices from $[m]$, thus $\sum_{i\le m} \kappa_i=|\sigma|$. Note that 
$$
ADv = \frac{\sqrt n}{|\sigma|} \sum_{i\le m} \kappa_i x_i = \frac{\sqrt n}{|\sigma|} \sum_{i\in\sigma} x_i=\sqrt n u 
$$
where we denote $u=\frac{1}{|\sigma|} \sum_{i\in\sigma} x_i$. We denote by $D^{1/2}$ the square root of $D$, i.e. the diagonal matrix with diagonal entries 
$\delta_{ii} = \sqrt{d_{ii}} = \frac{\sqrt{\kappa_i n}}{\sqrt{c_i |\sigma|}}$.

In the same way, we get
$$
D^{\frac{1}{2}}v = (\sqrt{\kappa_i/|\sigma|})_{i\le m} \quad \text{and} \quad AD^{\frac{1}{2}} = \sqrt{\frac{n}{|\sigma|}}(\sqrt{\kappa_i} x_i)_{i\le m}
$$

Thus, the matrix $C:=AD^{\frac{1}{2}}-\frac{1}{\|D^{\frac{1}{2}}v\|_2^2}ADv\otimes D^{\frac{1}{2}} v$ has its columns equal to 
$$
\sqrt{\frac{n}{|\sigma|}} \sqrt{\kappa_i}(x_i-u). 
$$

Applying Corollary \ref{cor-kernel} yields the result. Finally, the fact that $\|ADv\|_2\le \frac{2\e}{3} \|v\|_2$ implies that $\|u\|_2\le \frac{2\e}{3\sqrt{n}}$.
\end{proof}

\bigskip

\section{Contact points of convex bodies} \label{sec-contact}

The points of intersection of the body $K$ and its ellipsoid of maximal volume, i.e. the contact points, play a crucial role in the understanding of the geometry of convex bodies.

\vskip 10pt

An important problem is to control the number of contact points of a convex body $K \subset \R^n$ in John's position. If it is centrally symmetric then the number of contact points satisfying \eqref{eq-john} is $n\le m \le n(n+1)/2$. This result was proved, for example, in \cite[Section 16]{T-J}. In fact, it is optimal, as studied by Pe{\l}czy{\'n}ski and Tomczak-Jaegermann \cite{PT-J}. In particular, they proved that for any $n\le m\le n(n+1)/2$ there exists a centrally symmetric convex body whose John's ellipsoid has exactly $m$ contact points. If $K$ is not centrally symmetric then more contact points might be needed and the estimate is $n(n+3)/2=M$, as showed by Gruber \cite{Grub}. He also proved that the set of convex bodies having less than $M$ contact points is of the first Baire category in $\mathcal{K}$, the set of all convex bodies in $\R^n$.

\vskip 10pt

Let us recall the definition of the Banach-Mazur distance between two convex bodies $K$ and $H$ in $\R^n$:
$$
d(K,H)=\underset{T\in GL(n), u\in\R^n}{\inf}\Big\{\alpha: H+u\subseteq TK\subseteq \alpha (H+u)\Big\}.
$$

Given $K\in \mathcal{K}$, Rudelson \cite{R2} showed that there exists a body $H$ arbitrary close to $K$, that is $d(H,K) \le 1+\e$, with a much smaller number of contact points, at most $Cn\ln n/\e^2$. Later, Srivastava \cite{Sr} was able to remove the logarithmic factor in the above theorem at the expense of finding the body $H$ at distance $\sqrt{5}+\e$ instead of $1+\e$. In the case where the body $K$ is symmetric, Srivastava shows the existence of a convex body $H$ having at most $32n/\e^2$ contact points so that $d(H,K) \le 1+\e$.

\vskip 10pt

The gap between these two results \cites{R2,Sr} remained open till know. The following theorem presents a unified solution which fills this gap.

\begin{thm} \label{thm-contact-points}
Let $K$ be a convex body in $\R^n$. Then for any $\e>0$ there exists a convex body $H\subset \R^n$ so that $H$ has at most $C n/\e^2$ contact points with its John's ellipsoid and 
$$
H\subset K\subset (1 + \e) H
$$
where $C>0$ is an absolute constant.
\end{thm}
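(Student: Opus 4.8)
The plan is to combine Theorem~\ref{thm-equal-john} with the scheme of Rudelson and Srivastava \cites{R2,Sr} for turning an (approximate) John decomposition into an approximating body.

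\emph{Setup and extraction.} Since the Banach--Mazur distance is affine invariant while John's ellipsoid, the contact points, and their decomposition are affine covariant, I would first assume $K$ is in John's position, so that there is a John decomposition $\{x_i,c_i\}_{i\le m}$ as in \eqref{eq-john} with the $x_i\in\partial K\cap\partial B_2^n$ the contact points. Fix a small absolute constant $c_0$ and apply Theorem~\ref{thm-equal-john} with $\e$ replaced by $\e':=c_0\e$: this produces a multi-set $\sigma\subset[m]$ with $|\sigma|\le n/c\e'^2=O(n/\e^2)$ and a vector $u=\frac1{|\sigma|}\sum_{i\in\sigma}x_i$ with $\|u\|_2\le\frac{2\e'}{3\sqrt n}$ such that $(1-\e')I_n\preceq\frac n{|\sigma|}\sum_{i\in\sigma}(x_i-u)\otimes(x_i-u)\preceq(1+\e')I_n$. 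The point, which is exactly what Theorem~\ref{thm-equal-john} buys over Srivastava's approximate John theorem, is that the family $\{x_i-u\}_{i\in\sigma}$ with common weight $n/|\sigma|$ is an approximate John decomposition of essentially optimal quality: its barycenter $\sum_{i\in\sigma}(x_i-u)$ vanishes \emph{exactly}, each vector satisfies $\bigl|\,\|x_i-u\|_2-1\,\bigr|\le\|u\|_2=O(\e'/\sqrt n)$ (since $\|x_i\|_2=1$), and its covariance is within a multiplicative $(1\pm\e')$ of $I_n$.

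\emph{From the sub-decomposition to the body.} I would then feed $\{x_i-u,\ n/|\sigma|\}_{i\in\sigma}$ into the construction of \cites{R2,Sr}, which from an approximate John decomposition on $N$ near-unit vectors produces a convex body whose John ellipsoid touches its boundary in at most $N$ points and which lies, up to a linear transformation, between a convex body and its $(1+C\delta)$-dilate, where $\delta$ measures the combined size of the three defects above. In our situation all three defects are $O(\e')$ — indeed the balancing defect is zero and the norm defect is only $O(\e'/\sqrt n)$ — so this yields a convex body $H$ with at most $|\sigma|\le Cn/\e^2$ contact points and with $H\subset K\subset(1+C\e')H$; choosing $c_0$ with $Cc_0\le1$ gives $H\subset K\subset(1+\e)H$. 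Here affine invariance is used twice: once to straighten the (exactly balanced, near-isotropic) family into an honest identity decomposition — formally, with $S=\frac n{|\sigma|}\sum_{i\in\sigma}(x_i-u)\otimes(x_i-u)$ and $T=S^{-1/2}$, the vectors $z_i:=T(x_i-u)$ satisfy $\sum_{i\in\sigma}\frac n{|\sigma|}z_i\otimes z_i=I_n$ and $\sum_{i\in\sigma}z_i=0$ with $\|z_i\|_2=1+O(\e')$ — and once more to transport the final body back and rephrase the conclusion in the nested-inclusion form of the statement.

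\emph{Where the difficulty lies.} Everything outside the conversion step is routine: placing $K$ in John's position, invoking Theorem~\ref{thm-equal-john}, and the elementary norm and covariance estimates on $x_i-u$. The real work is the conversion, and it has two delicate aspects. First, the extracted family is not an exact John decomposition (the $z_i$ are not unit vectors, and renormalizing them restores balancing only approximately); one must absorb this into a genuine John decomposition of an ellipsoid $\mathcal E$ that is $(1+O(\e'))$-close to $B_2^n$ and then argue by affine covariance. Second, one must exhibit the approximating body so that its John ellipsoid touches its boundary at \emph{only} the $O(n/\e^2)$ selected points — naive candidates such as convex hulls with a round cap have infinitely many contact points — and then estimate the Banach--Mazur distance to $K$, tracking how each $O(\e')$ defect propagates linearly. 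It is precisely this linear, rather than square-root, propagation, made possible by the sharp bounds of Theorem~\ref{thm-equal-john} (especially the $O(\e/\sqrt n)$ control on $\|u\|_2$, versus $O(\sqrt\e/\sqrt n)$ in \cite{Sr}), that upgrades the $(\sqrt5+\e)$-approximation of \cite{Sr} to a $(1+\e)$-approximation at the same linear cost in the number of contact points.
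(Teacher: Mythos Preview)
Your proposal is correct and follows essentially the same approach as the paper: the paper's proof is only a sketch saying that Theorem~\ref{thm-contact-points} follows by running Rudelson's construction from \cite[Section~4]{R2} verbatim, substituting his Lemma~3.1 (the approximate John decomposition step) by Theorem~\ref{thm-equal-john} as a black box. Your write-up is in fact more detailed than the paper's, and your emphasis on why the linear-in-$\e$ propagation (enabled by the $O(\e/\sqrt n)$ bound on $\|u\|_2$) is the key improvement is exactly the point.
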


Let us only sketch the proof as it follows literally from Rudelson's approach \cite{R2} once we inject the results proved earlier in this paper. The first step in Rudelon's proof is to approximate the John's decomposition of the identity given by the body $K$. As we already mentioned, a loss of a logarithmic factors appears at this place in Rudelson's proof as he provides an approximate John's decomposition with $c(\e)n\log n$ vectors. 
This approximate John's decomposition is then used in the second step where he constructs the approximating body $H$. Thus, Theorem \ref{thm-contact-points} follows by substituting (as a black box) Lemma 3.1 of \cite{R2} (step 1 of Rudelson's construction \cite[Section 4]{R2}) by Theorem \ref{thm-equal-john}.

\bigskip

\section{Isomorphic Dvoretzky} \label{sec-dvor}

Another interesting application is an isomorphic version of Dvoretzky's theorem.

\vskip 10pt

Given a convex body $K\subset \R^n$ and $1\le k \le n$, the isomorphic version of Dvoretzky's theorem asks for an upper bound on the minimal distance of a $k$-dimensional section of $K$ to $B_2^k$, the euclidean ball of dimension $k$.

\vskip 10pt

This problem has been extensively studied in the literature (e.g. see \cites{MS1,MS2,GGM,LT-J}). Although this problem is settled, we are interested in the method developed in \cite{GGM} which had an extra logarithmic factor. Our contribution consists of showing that this method, which uses Rudelson's theorem alongside some inequalities of Gaussian processes established by Gordon \cite{Go1,Go2}, provides the optimal isomorphic Dvoretzky's theorem once combined with Theorems \ref{thm-equal-john} and \ref{thm-contact-points}.

\vskip 10pt

The following theorem appears as Theorem 6.7 in \cite{LT-J}.

\begin{thm} \label{thm-iso-dvor}
Let $K$ be a convex body in $\R^n$. Then for any $k \le cn$, there exists a $k$-dimensional affine subspace $F$ of $\R^n$ so that 
$$
d(F\cap K, B_2^k) \le C\sqrt{\frac{k}{\ln(1+\frac{n}{k})}}
$$
where $C,c>0$ are universal constants.
\end{thm}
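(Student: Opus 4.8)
The plan is to run the Gordon--Gu\'edon--Meyer scheme \cite{GGM} (as reproduced in \cite[Chapter~6]{LT-J}) with its first step replaced by the sharp approximate John's decomposition furnished by Theorem~\ref{thm-equal-john}. Recall that the scheme has two parts. First, after a linear map and a controlled perturbation of $K$, one reduces to the case where the John's decomposition of the body consists of few, well-balanced contact points with comparable weights; this is where \cite{GGM} invoke Rudelson's theorem, which only yields of order $n\log n$ vectors and with no control on the sizes of the weights, and this is the source of the superfluous logarithm. We replace this step by Theorem~\ref{thm-equal-john}: after a linear transformation we may assume $K$ is in John's position, and then, fixing a small absolute constant $\e$, Theorem~\ref{thm-equal-john} together with the construction underlying Theorem~\ref{thm-contact-points} produces a body $H$ with $d(H,K)\le 1+\e$ whose $m'\le Cn$ contact points $y_1,\dots,y_{m'}\in\partial H\cap S^{n-1}$ come with a balanced, near-tight John decomposition with comparable (indeed, after the standard normalisation, equal) weights; the shift $u$ appearing in Theorem~\ref{thm-equal-john} has size $O(\e/\sqrt n)$, negligible at all scales entering below. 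Since the sought estimate is isomorphic and the final bound is anyway $\ge 1$, the $O(1)$ loss between $K$ and $H$ only affects the constant $C$, so it is harmless to prove the statement for $H$, which we keep calling $K$.

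For the second part, note that $K$ in John's position satisfies $B_2^n\subseteq K\subseteq L:=\{y:\langle y,y_j\rangle\le 1,\ j\le m'\}$, so for any $k$-dimensional subspace $F$ one has $B_2^k=B_2^n\cap F\subseteq K\cap F\subseteq L\cap F$ and hence
\[
d(K\cap F,\,B_2^k)\ \le\ \Big(\ \min_{\theta\in S^{n-1}\cap F}\ \max_{j\le m'}\ \langle\theta,y_j\rangle\ \Big)^{-1},
\]
the inner minimum being positive because $0$ lies in the interior of $\mathrm{conv}\{y_j\}$ (a consequence of the balance condition and of $\{y_j\}$ spanning $\R^n$). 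It thus suffices, for each $k\le cn$, to produce a $k$-dimensional subspace $F$ on which $\max_{j}\langle\theta,y_j\rangle\ge\rho$ for all $\theta\in S^{n-1}\cap F$, with $\rho$ of order $\sqrt{\ln(1+n/k)\,/\,k}$. This is precisely the setting of Gordon's ``escape through the mesh'' \cite{Go2}: a Haar-random $k$-dimensional subspace misses the set $U_\rho:=\{\theta\in S^{n-1}:\max_j\langle\theta,y_j\rangle<\rho\}$ with positive probability as soon as the Gaussian width $w(U_\rho)$ is, say, at most $\tfrac12\sqrt{n-k}$.

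The analytic core is therefore the estimate $w(U_\rho)\le\tfrac12\sqrt{n-k}$ for $\rho\asymp\sqrt{\ln(1+n/k)/k}$, which is exactly the Gaussian-width computation carried out in \cite{GGM} and \cite[Chapter~6]{LT-J} via a Dudley-type entropy bound together with Gordon's comparison inequalities \cite{Go1,Go2}; its conclusion is sharp precisely when $\{y_j\}$ is a balanced, near-tight frame of only $O(n)$ unit vectors with comparable weights, which is what Theorem~\ref{thm-equal-john} now guarantees and what was previously unavailable. We do not expect a genuinely new obstacle: the hard analysis already exists, and our contribution is the sharp input to it. The one point that requires care is to check that the first step of \cite{GGM} is used purely as a black box --- that the downstream argument needs nothing beyond an $O(1)$-isomorphic body carrying a balanced John-type decomposition with $O(n)$ comparably weighted contact points --- so that Theorem~\ref{thm-equal-john} (equivalently Theorem~\ref{thm-contact-points}) can be substituted for Rudelson's lemma verbatim, exactly as in the sketch of Theorem~\ref{thm-contact-points}; and to note that the shift $u$ and the factors $1\pm\e$ perturb every quantity above by a constant factor only, so that $\e$ may be fixed to a small absolute constant since no $\e$ appears in the final statement.
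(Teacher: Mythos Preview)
Your proposal is correct and follows essentially the same approach as the paper: both argue that the theorem follows by running the Gordon--Gu\'edon--Meyer scheme verbatim, with Rudelson's approximate John's decomposition replaced (as a black box) by the sharper Theorems~\ref{thm-equal-john} and~\ref{thm-contact-points}. You give a more detailed exposition of the second (Gaussian) step than the paper's terse sketch, and you describe its analytic core as a Dudley-type width bound rather than the Dvoretzky--Rogers variant the paper mentions, but this is a difference in how you summarize the cited argument, not a difference in your own strategy.
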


We are able to provide a proof of this theorem as a simple consequence of the argument introduced by Gordon, Gu\'edon and Meyer \cite{GGM}, who obtained the same statement with an extra logarithmic factor in the dimension of the subspace. Their proof consists of two main steps. Firstly, they use Rudelson's result (see \cite{R2,R3}) to obtain a new body which is close enough to $K$ but has few contact points with its John's ellipsoid and the John's decomposition of the identity has all the weights of the same order of magnitude\footnote{This is why Srivastava's result \cite[Theorem 5]{Sr} couldn't be applied here.}. Secondly, they reduce the study to a polytope which corresponds to the contact points of the new body. In this part, they use some inequalities of Gordon \cite{Go1,Go2}, and a variant of the Dvoretzky-Rogers lemma \cite{DR}.

Thus, verbally repeating their proof and replacing Rudelson's result by Theorems~\ref{thm-equal-john} and~\ref{thm-contact-points} yields the above Theorem~\ref{thm-iso-dvor} with no extra $\log$ factor compared to what is shown in \cite{GGM}.

\bigskip

\end{document}